\DeclareFontShape{OMX}{cmex}{m}{n}{
  <-7.5> cmex7
  <7.5-8.5> cmex8
  <8.5-9.5> cmex9
  <9.5-> cmex10
}{}
\newtheorem{example}[theorem]{Example}
\author{Rebecca Rothermel\footnote[1]{Department of Mathematics, Saarland University Saarbr\"ucken, Germany} \and Wladimir Panfilenko\footnotemark[1] \and Prateek Sharma\footnote[4]{Chair of Applied Mechanics, Saarland University Saarbr\"ucken, Germany} \and Anne Wald\footnote[2]{Institute for Numerical and Applied Mathematics, University of G\"ottingen, Germany} \and Thomas Schuster\footnotemark[1]\;\;\footnote[3]{{\tt email: thomas.schuster@num.uni-sb.de}, corresponding author} \and Anne Jung\footnotemark[4] \and Stefan Diebels\footnotemark[4]}
\title{A method for determining the parameters in a rheological model for viscoelastic materials by minimizing Tikhonov functionals}
\begin{document}

\maketitle

\begin{abstract}
Mathematical models describing the behavior of viscoelastic materials are often based on evolution equations that measure the change in stress depending on its material parameters such as stiffness, viscosity or relaxation time.
In this article, we introduce a Maxwell-based rheological model, define the associated forward operator and the inverse problem in order to determine the number of Maxwell elements and the material parameters of the underlying viscoelastic material. We perform a relaxation experiment by applying a strain to the material and measure the generated stress.
Since the measured data varies with the number of Maxwell elements, the forward operator of the underlying inverse problem depends on parts of the solution. By introducing assumptions on the relaxation times, we propose a clustering algorithm to resolve this problem. We provide the calculations that are necessary for the minimization process and conclude with numerical results by investigating unperturbed as well as noisy data. We present different reconstruction approaches based on minimizing a least squares functional. Furthermore, we look at individual stress components to analyze different displacement rates. Finally, we study reconstructions with shortened data sets to obtain assertions on how long experiments have to be performed to identify conclusive material parameters. 
\end{abstract}

\begin{keywords}
parameter identification, viscoelasticity, inverse problem, rheological model, solution dependent 
forward operator, Tikhonov functional
\end{keywords}

\noindent\textbf{MSC 2010:} 34A55, 74D05, 74P10

\section{Introduction}
\label{sec:intro}


Simulations are used to predict the behavior of a material without conducting expensive experiments. In order to achieve accurate results from a simulation, the physical behavior as well as the properties of the material should be captured by the simulation model. The behavior of the material can be interpreted by the qualitative analysis of an experimental result obtained from material testing. However, the properties of the material need to be quantitatively determined through comparison of the simulation and experimental results. With variation and adjustment of the simulation results, the parameters of the model are optimised to fit the experimental data. Thus, parameter identification in materials science is an important field of application for inverse problems with many different purposes. 
Among them are the identification of the stored energy function of hyperelastic materials \cite{hartmann2003polyconvexity, woestehoff2015uniqueness, binder2015defect, seydel2017linearization, seydel2017identifying, klein2019sequential}, the surface enthalpy dependent heat fluxes of steel plates \cite{rothermel2020solving, rothermel2019parameter}, inverse scattering problems 
\cite{COLTON;KRESS:98} or the refractive index through terahertz tomography \cite{awts18}. All these inverse problems are ill-posed in the sense that even small perturbations in the measured data cause severe artifacts in the solution. This is why the application of regularization methods is necessary. A standard approach is to use the residual between simulations and measurements as a
data fitting term, see \cite{Chen2013, Heinze2018}. Further authors conduct a sensitivity analysis to determine the possible deviations in the parameters due to noise \cite{Gavrus1996, Hartmann2001, hartmann2001numerical, Mahnken1996}.

Viscoelastic material behavior is exhibited by various polymeric materials such as adhesives, elastomers and rubber. The modeling of these materials requires parameters that define the relaxation time spectrum, to model the loading rate dependent behavior \cite{Scheffer2013, bonfanti2020fractional, jalocha2020payne, sharma2020}. The reconstruction of these parameters do not only depend on the precision, but also on the duration of the experiments. The numerical implementation of a continuous relaxation time spectrum with discrete parameters represents a further challenge. Therefore, the parameter identification of viscoelastic materials is in great demand and is used in many fields.
Results for the identification of material parameters of viscoelastic structures with a linear model can also be found in \cite{diebels2018identifying, FERNANDA;ET;AL:11}. High-order discontinuous Galerkin methods for anisotropic viscoelastic wave equations are found in \cite{SHUKLA;ET;AL:11}. There, the authors use a constitutive equation with memory variables.
An overview of different methods for solving inverse problems and deeper insights to regularization theory can be found in the standard textbooks \cite{engl1996regularization, louis2013inverse, rieder2013keine, KIRSCH:11}. For nonlinear problems, such as the one presented in this article, these methods have to be adapted correspondingly (see \cite{kaltenbacher2008iterative, rieder2013keine, wald2017sequential, wald2018fast}).

\textit{Outline.} In Section \ref{sec:material} we describe the rheological model for viscoelastic materials on which the investigations in this article are based. Assuming that the number of Maxwell elements are given the constitutive model can be solved analytically, which is done in Section \ref{sec: Analytische Loesung}. Relying on this model the simulation of data is performed in Section \ref{sec:data-simulation}. The task of computing the model parameter from measured stresses at certain time instances is an inverse problem. In contrast to standard settings for inverse problems, the forward operator depends in this case on parts of the solution, namely the number of maxwell elements. In Section \ref{sec: Cluster} we develop a clustering algorithm to overcome this difficulty. In Section \ref{sec:NumericalResults} we perform a series of numerical experiments using exact as well as noisy data. Since the inverse problem is ill-posed we introduce regularization methods by minimizing corresponding Tikhonov functionals and prove that this stabilizes the solution process. Furthermore we demonstrate the effect of shortening the data series and finally obtain assertions on its influence to the reconstruction result. A concluding section finishes the article.


\section{A mathematical model for material behavior} 
\label{sec:material}


We consider relaxation experiments in which a strain is applied to a material. 
For a given displacement rate $\eta$ and maximum strain value $\bar{\varepsilon}$ we write the strain in a time interval $t \in [0,T]$ as 
\begin{align}
\varepsilon(t)= \left\{\begin{array}{ll} \eta \cdot t, &0 \leq t\leq \frac{\bar{\varepsilon}}{\eta} \\
         \bar{\varepsilon}, &  \frac{\bar{\varepsilon}}{\eta} <t \leq T. \end{array}\right. 
        \label{eqn:strain}
\end{align}
The function is illustrated in Figure \ref{abb:strain_exp} and describes the following procedure. The material is stretched at a displacement rate $\eta$ until we reach a maximum strain value $\bar{\varepsilon}$. This happens accordingly at time $t=\frac{\bar{\varepsilon}}{\eta}$. Afterwards the applied strain is kept constant. 

\begin{figure}[h]
\centering
\def\svgwidth{.6\linewidth}
\begingroup%
  \makeatletter%
  \providecommand\color[2][]{%
    \errmessage{(Inkscape) Color is used for the text in Inkscape, but the package 'color.sty' is not loaded}%
    \renewcommand\color[2][]{}%
  }%
  \providecommand\transparent[1]{%
    \errmessage{(Inkscape) Transparency is used (non-zero) for the text in Inkscape, but the package 'transparent.sty' is not loaded}%
    \renewcommand\transparent[1]{}%
  }%
  \providecommand\rotatebox[2]{#2}%
  \newcommand*\fsize{\dimexpr\f@size pt\relax}%
  \newcommand*\lineheight[1]{\fontsize{\fsize}{#1\fsize}\selectfont}%
  \ifx\svgwidth\undefined%
    \setlength{\unitlength}{420bp}%
    \ifx\svgscale\undefined%
      \relax%
    \else%
      \setlength{\unitlength}{\unitlength * \real{\svgscale}}%
    \fi%
  \else%
    \setlength{\unitlength}{\svgwidth}%
  \fi%
  \global\let\svgwidth\undefined%
  \global\let\svgscale\undefined%
  \makeatother%
  \begin{picture}(1,0.75)%
    \lineheight{1}%
    \setlength\tabcolsep{0pt}%
    \put(0,0){\includegraphics[width=\unitlength,page=1]{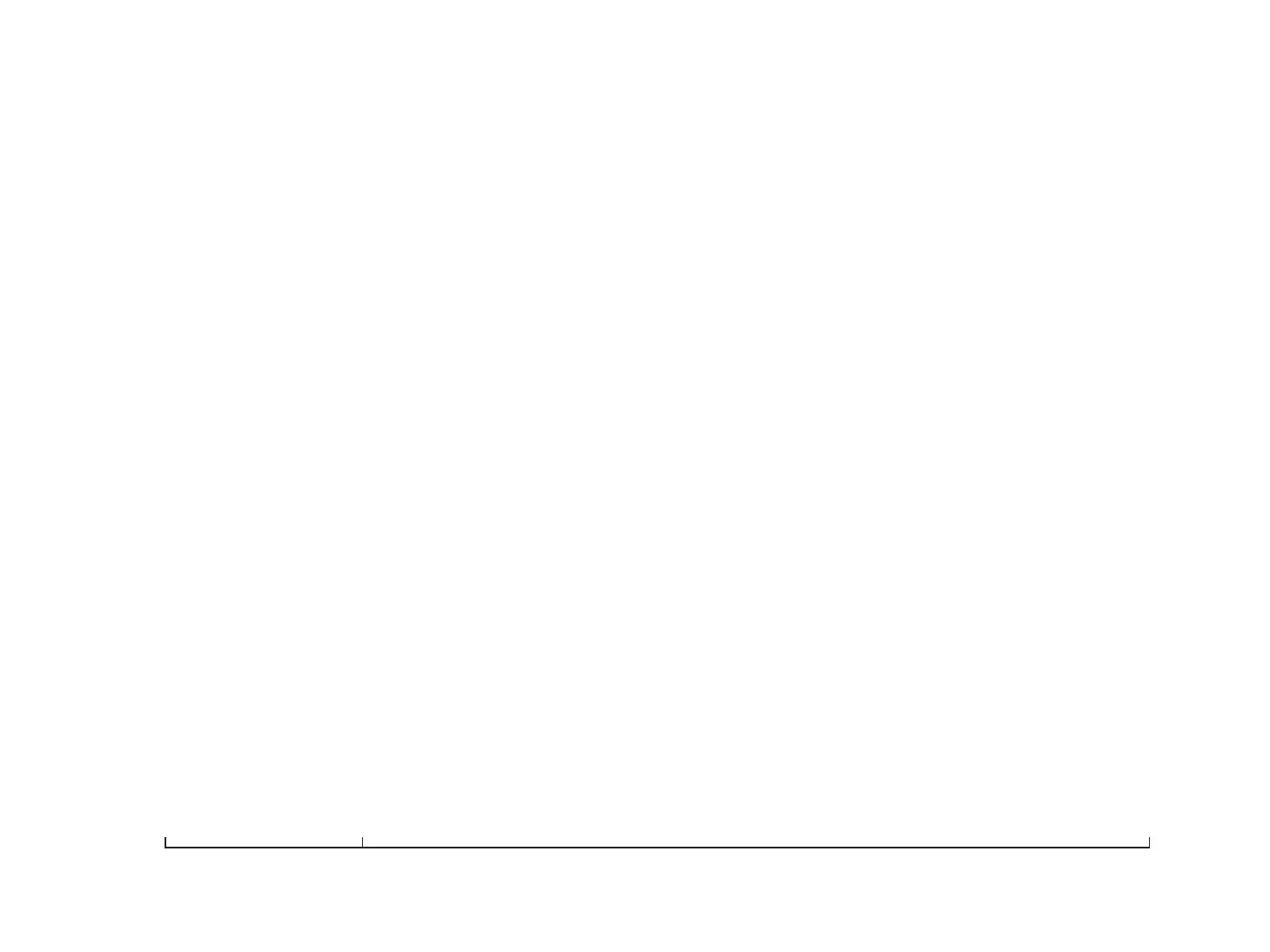}}%
    \put(0.09900378,0.02990773){\color[rgb]{0,0,0}\makebox(0,0)[lt]{\lineheight{1.25}\smash{\begin{tabular}[t]{l}$t=0$\end{tabular}}}}%
    \put(0.86999634,0.02990774){\color[rgb]{0,0,0}\makebox(0,0)[lt]{\lineheight{1.25}\smash{\begin{tabular}[t]{l}$t=T$\end{tabular}}}}%
    \put(0.48303607,0.03125){\makebox(0,0)[lt]{\lineheight{1.25}\smash{\begin{tabular}[t]{l}Time [s]\end{tabular}}}}%
    \put(0,0){\includegraphics[width=\unitlength,page=2]{StrainExplained.pdf}}%
    \put(0.09165754,0.27146539){\rotatebox{90}{\makebox(0,0)[lt]{\lineheight{1.25}\smash{\begin{tabular}[t]{l}Strain [\%]\end{tabular}}}}}%
    \put(0,0){\includegraphics[width=\unitlength,page=3]{StrainExplained.pdf}}%
    \put(0.08469387,0.66122449){\color[rgb]{0,0,0}\makebox(0,0)[lt]{\lineheight{1.25}\smash{\begin{tabular}[t]{l}$\bar{\varepsilon}$\end{tabular}}}}%
    \put(0.23349887,0.03058748){\color[rgb]{0,0,0}\makebox(0,0)[lt]{\lineheight{1.25}\smash{\begin{tabular}[t]{l}$t=\frac{\bar{\varepsilon}}{\eta}$\end{tabular}}}}%
  \end{picture}%
\endgroup%

\caption{Strain-time curve $\varepsilon$ with displacement rate $\eta$ and maximum strain value $\bar{\varepsilon}$ }
\label{abb:strain_exp}
\end{figure}

We describe the stress in the material by a rheological multi-parameter model combining several Maxwell elements. Two opposing properties are essentially determining the time evolution of a material under strain, elasticity and viscosity. Elasticity is modeled by a spring, whereas viscosity is modeled by a damper. The series composition of a spring and a damper yields a Maxwell element.
We use one of the most common combinations to represent the material, a parallel combination of a spring with different Maxwell elements \cite{Goldschmidt2015b, Johlitz2007, Scheffer2013}. A parallel combination of these elements ensures that the entire relaxation spectrum of the material can be represented by the model.

\begin{figure}[h]
\centering
\def\svgwidth{.6\linewidth}
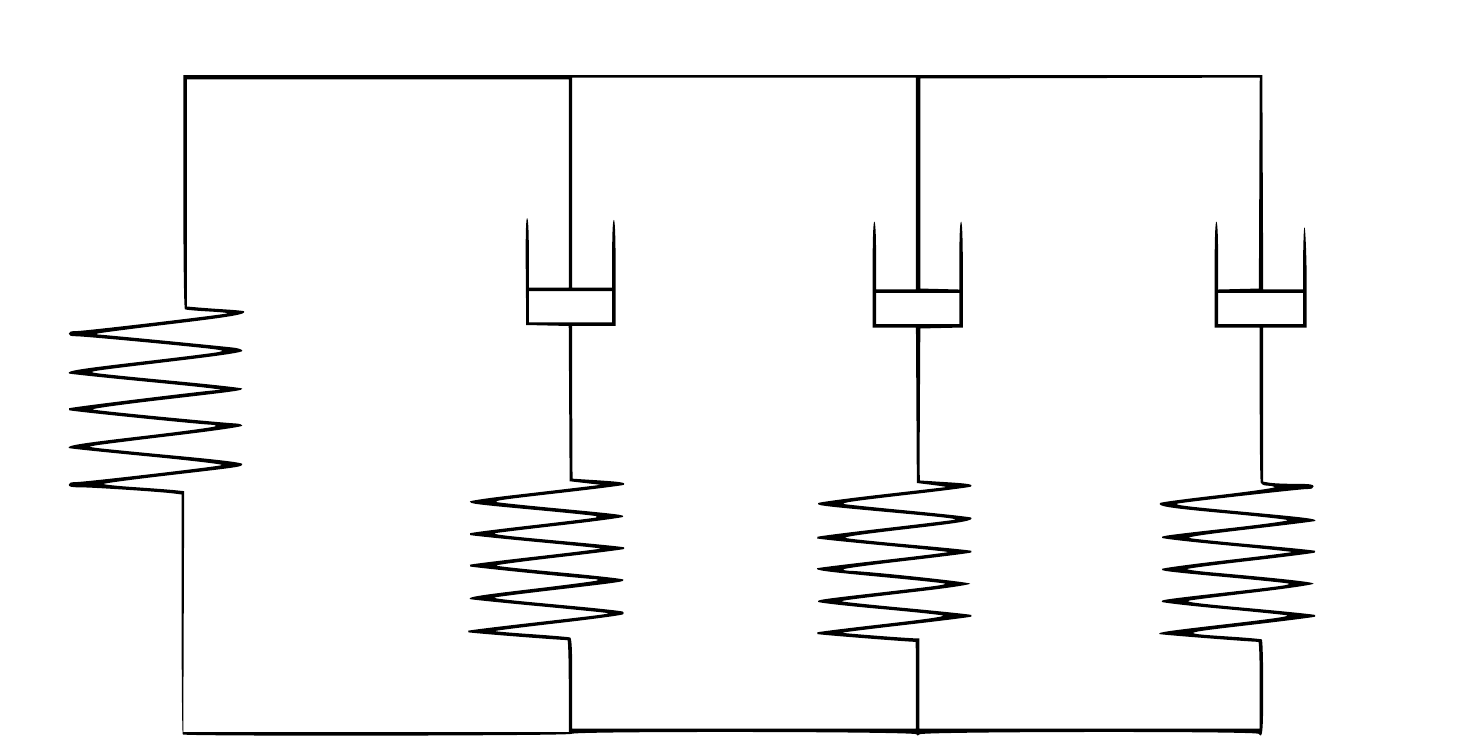
\caption{Rheological Model with three Maxwell elements}
\label{abb:rheologicalModel}
\end{figure}

The number of Maxwell elements can extend to any number $n$ with a relaxation time $\tau_j$ in each of the dampers and a stiffness of the spring $\mu_j$ in each of the Maxwell elements. In Figure \ref{abb:rheologicalModel} we see an illustration with three Maxwell elements. The deformation across the damper and across the spring in a Maxwell element changes according to the relaxation time of the damper. The Maxwell springs reach the original undeformed position and do not produce any stress after fully relaxing. Therefore, with a single Maxwell element the model can only simulate one stiffness for the entire duration of relaxation of the material. However, viscoelastic materials show different stiffness at different relaxation times \cite{sharma2020, Goldschmidt2015b}. The entire relaxation spectrum can be divided into different regions such as the flow region, the entanglement region, the transition region and the glassy region\cite{baurngaertel1992}. The arrangement, the length and the entanglement of the polymeric chains on the molecular level explains the varying stiffness during the relaxation spectrum. A shorter chain relaxes faster than a longer chain and hence after its relaxation does not contribute to the stiffness of the material \cite{berry1968}. Moreover, if all the Maxwell elements relax to zero stresses then the equilibrium position is attained and the single spring with the stiffness $\mu$ represents the basic stiffness of the material, which ensures that the material has a stiffness in its unperturbed position and does not behave like a fluid.
The deformation of the material represented by the strain value $\varepsilon$ is divided into an elastic component $\varepsilon_j^{\,e}$ and an inelastic component $\varepsilon_j^{\,i}$ in each of the Maxwell elements. The elastic component corresponds to the spring and the non-elastic component to the damper. As the stretching of the damper is dependent on its relaxation time an evolution equation based on the energy conservation is used to model the change in the inelastic strain with time \cite{johlitz2013, reese1998}. For small deformations the evolution equation is given by
\begin{equation}
\dot{\varepsilon}_j^{\,i}(t) = \frac{\varepsilon(t) - \varepsilon^{\,i}_j(t)}{\tau_j/2},
\label{eqn:evolution} \
\end{equation} 
where $\dot{\varepsilon}_j^{\,i}$  represents the time derivative. 
The total stress produced in the system is then given by the sum of stresses induced in each of the springs. Assuming a linear elastic behavior of the springs. The stress is given by
\begin{equation}
\sigma(t) = \mu \, \varepsilon(t) + \sum\limits_{j=1}^{n}\, \mu_j (\varepsilon(t) - \varepsilon_j^{\,i}(t)).
\label{eqn:stressComputation}
\end{equation}
With this information we have now all ingredients available to construct the forward operator as 
\begin{align}\label{eqn: Forward operator}
F_n: \mathbb{R}_{\geq 0}^{2n+1} \times \mathbb{N} &\to L^2([0,T])\\
\left(\mu, \mu_1, \dots, \mu_n, \tau_1, \dots \tau_n, n \right) &\mapsto \sigma,
\end{align}
where $n$ is the number of Maxwell elements and $\sigma$ is the stress defined by equation \eqref{eqn:stressComputation}.
We note that the forward operator depends on parts of the solution at this point since we require the unknown number of Maxwell elements $n$ to calculate the stress. We will solve this problem in Section \ref{sec: Cluster}. 

At this point, the inelastic stresses are included as a constraint in the formulation of the inverse problem. In the following section we will see that these constraints can be directly included to obtain an explicit formulation of the forward operator.


\section{Analytical solution of the forward problem} 
\label{sec: Analytische Loesung}


For the solution of the forward problem, we first want to focus on the calculation of the inelastic component of the strain value $\varepsilon_j^i$, which is not explicitly given as a so-called internal variable of the system.
An evolution equation such as \eqref{eqn:evolution} is often solved by numerical methods like the Crank-Nicolson method. However, these methods have the disadvantage of approximation errors. Since we are dealing with an ill-posed inverse problem, it is desirable to avoid such errors in the calculation of the forward problem. 
To this end, we discuss the analytical solution of the evolution equation and, using this, the forward problem in the next section. 

Apart from the mechanical application \cite{reese1998}, the evolution equation \eqref{eqn:evolution} is also used in other applications like Magnetic Particle Imaging  \cite{croft2012relaxation} to model relaxation. Its solution can be formulated analytically as
\begin{align*}
\varepsilon_j^i(t)=\int \limits_0^t \varepsilon(\tilde{t}) \frac{2}{\tau_j} \exp\left( -2 \frac{t-\tilde{t}}{\tau_j} \right) d\tilde{t}.
 \end{align*}

By inserting the piecewise linear strain \eqref{eqn:strain} we obtain the following result:
 
\begin{proposition}

The inelastic strain $\varepsilon_j^i$ of the $j$-th Maxwell element with the corresponding relaxation time $\tau_j$ at time $t\in [0,T]$ is given by
\begin{align}
\varepsilon_j^i(t)= \left\{\begin{array}{ll}  \frac{\tau_j}{2}\eta \exp\left( -\frac{2}{\tau_j} t \right)+\eta t -\frac{\tau_j}{2}\eta , &0 \leq t\leq \frac{\bar{\varepsilon}}{\eta} \\
        \frac{\tau_j \eta}{2}\exp\left( -\frac{2}{\tau_j} t \right) \left[ 1- \exp\left( \frac{2\bar{\varepsilon}}{\tau_j \eta} \right)  \right] +\bar{\varepsilon}, &  \frac{\bar{\varepsilon}}{\eta} <t \leq T. \end{array}\right. 
        \label{eqn:strain_inelastic}
\end{align}

\end{proposition}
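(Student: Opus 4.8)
The plan is to prove the formula by evaluating the integral representation
\[
\varepsilon_j^i(t)=\int_0^t \varepsilon(\tilde t)\,\frac{2}{\tau_j}\exp\!\left(-2\frac{t-\tilde t}{\tau_j}\right)d\tilde t
\]
directly, splitting the analysis into the two regimes that correspond to the two branches of the claim. The only structural feature that matters is where the upper limit $t$ lies relative to the kink $t_1:=\bar\varepsilon/\eta$ of the strain profile \eqref{eqn:strain}, so I would treat the loading phase $0\le t\le t_1$ and the holding phase $t_1<t\le T$ separately.

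First I would handle the loading phase $0\le t\le t_1$. Here the entire integration interval lies in the linear part of \eqref{eqn:strain}, so $\varepsilon(\tilde t)=\eta\tilde t$, and after pulling the factor $\exp(-2t/\tau_j)$ out of the integral the task reduces to computing $\int_0^t \tilde t\,\exp(2\tilde t/\tau_j)\,d\tilde t$. A single integration by parts produces this antiderivative, and multiplying back by $\tfrac{2\eta}{\tau_j}\exp(-2t/\tau_j)$ and simplifying yields exactly the first branch $\tfrac{\tau_j}{2}\eta\exp(-2t/\tau_j)+\eta t-\tfrac{\tau_j}{2}\eta$.

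For the holding phase $t_1<t\le T$ I would split the integral at $t_1$ into a contribution from the linear part and a contribution from the constant part $\varepsilon(\tilde t)=\bar\varepsilon$. The first piece is precisely the antiderivative already obtained in the loading phase, now evaluated at the fixed upper limit $t_1$ instead of $t$; the second piece is the elementary integral $\int_{t_1}^t \bar\varepsilon\,\tfrac{2}{\tau_j}\exp(-2(t-\tilde t)/\tau_j)\,d\tilde t=\bar\varepsilon\bigl[1-\exp(-2(t-t_1)/\tau_j)\bigr]$. Adding the two pieces and using $\eta t_1=\bar\varepsilon$ to cancel the terms carrying $\bar\varepsilon\exp(2t_1/\tau_j)$ collapses the result to the second branch, with the factor $\exp(2\bar\varepsilon/(\tau_j\eta))$ appearing exactly as $\exp(2t_1/\tau_j)$.

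I do not expect a genuine obstacle, since the statement is a closed-form evaluation rather than an existence or estimate result; the only real risk is bookkeeping, namely keeping the signs in the exponentials straight and verifying that the two branches agree at $t=t_1$. As an independent check, and indeed an alternative proof, I would observe that \eqref{eqn:evolution} is the linear ODE $\dot\varepsilon_j^i+\tfrac{2}{\tau_j}\varepsilon_j^i=\tfrac{2}{\tau_j}\varepsilon(t)$ subject to $\varepsilon_j^i(0)=0$. Solving it with an affine particular solution on $[0,t_1]$ and a constant particular solution $\varepsilon_j^i\equiv\bar\varepsilon$ on $(t_1,T]$, and then fixing the homogeneous constant on the second interval by requiring continuity of $\varepsilon_j^i$ at $t_1$, reproduces both branches and confirms in particular the constant $\tfrac{\tau_j\eta}{2}\bigl(1-\exp(2\bar\varepsilon/(\tau_j\eta))\bigr)$.
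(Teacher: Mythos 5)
Your proposal is correct and follows essentially the same route as the paper's proof: direct evaluation of the convolution integral, case-split at $t_1=\bar{\varepsilon}/\eta$, integration by parts for $\int_0^{t}\tilde t\exp(2\tilde t/\tau_j)\,d\tilde t$ in the loading phase, and splitting the integral at $t_1$ with the identity $\eta t_1=\bar{\varepsilon}$ to collapse the holding-phase expression. The supplementary ODE-based verification you sketch is a sensible extra consistency check but is not part of the paper's argument.
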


\begin{proof}
We first consider $0 \leq t\leq \frac{\bar{\varepsilon}}{\eta}$ and can thus use $\varepsilon(t)=\eta t$,
\begin{align*}
  \varepsilon_j^i(t)&= \frac{2}{\tau_j} \eta \exp\left(-\frac{2}{\tau_j} t\right)  \int \limits_0^t \tilde{t} \exp\left(  \frac{2}{\tau_j} \tilde{t} \right) d\tilde{t}\\
 &=  \frac{2}{\tau_j} \eta \exp\left(-\frac{2}{\tau_j} t\right) \frac{\tau_j^2}{4} \left[ \exp\left( \frac{2}{\tau_j} t \right) \left( \frac{2}{\tau_j} t - 1 \right) +1 \right]    , 
\end{align*}
where we used $\int \limits_0^t \tilde{t} \exp\left(a\tilde{t}\right) d\tilde{t} = \frac{1}{a^2} \left(\exp\left(at \right) (at-1)  +1 \right)$ for $a \in \mathbb{R}$ in the last step. The remainder is only a matter of transformation, 
\begin{align*}
  \varepsilon_j^i(t) &=
   \frac{\tau_j}{2} \eta \exp\left( - \frac{2}{\tau_j} t \right) \frac{2}{\tau_j}  t\exp\left( \frac{2}{\tau_j} t \right)\\
   & \ \ \ -\frac{\tau_j}{2} \eta \exp \left( -\frac{2}{\tau_j}  t\right)\exp \left( \frac{2}{\tau_j}  t\right)\\
   & \ \ \ + \frac{\tau_j}{2} \eta \exp\left(- \frac{2}{\tau_j} t \right)\\
   &= \frac{\tau_j}{2}\eta \exp\left( -\frac{2}{\tau_j} t \right)+\eta t -\frac{\tau_j}{2}\eta. 
\end{align*}
Next we examine the case $\frac{\bar{\varepsilon}}{\eta} <t \leq T$, where we have to split the integral in two parts to be able to use the corresponding value for $\varepsilon(t)$,

\begin{align*}
  \varepsilon_j^i(t) = 
   \frac{2}{\tau_j} \exp\left(-\frac{2}{\tau_j} t\right)  \left[ \eta \int \limits_0^{\frac{\bar{\varepsilon}}{\eta}} \tilde{t} \exp\left(  \frac{2}{\tau_j} \tilde{t} \right) d\tilde{t} + \bar{\varepsilon} \int \limits_{\frac{\bar{\varepsilon}}{\eta}}^t \exp\left(  \frac{2}{\tau_j} \tilde{t} \right) d\tilde{t} \right].
\end{align*}
For each integral we then get 
\begin{align*}
  \int \limits_0^{\frac{\bar{\varepsilon}}{\eta}} \tilde{t} \exp\left(  \frac{2}{\tau_j} \tilde{t} \right) d\tilde{t}= \frac{\tau_j^2}{4} \left[ \exp\left( \frac{2\bar{\varepsilon}}{\tau_j \eta} \right) \left( \frac{2\bar{\varepsilon}}{\tau_j \eta} -1 \right) +1  \right],\\
  \int \limits_{\frac{\bar{\varepsilon}}{\eta}}^t \exp\left(  \frac{2}{\tau_j} \tilde{t} \right) d\tilde{t}= \frac{\tau_j}{2} \left[ \exp\left( \frac{2}{\tau_j} t \right) -\exp \left( \frac{2\bar{\varepsilon}}{\tau_j \eta} \right)   \right]. 
\end{align*}
Inserting these results yields 
\begin{align*}
  \varepsilon_j^i(t) 
   &=\frac{2}{\tau_j} \exp\left(-\frac{2}{\tau_j} t\right)
   \left[ \eta \frac{\tau_j^2}{4} \left[ \exp\left( \frac{2\bar{\varepsilon}}{\tau_j \eta} \right) \left( \frac{2\bar{\varepsilon}}{\tau_j \eta} -1 \right) +1  \right] 
   + \bar{\varepsilon} \frac{\tau_j}{2} \left[ \exp\left( \frac{2}{\tau_j} t \right) -\exp \left( \frac{2\bar{\varepsilon}}{\tau_j \eta} \right)   \right] \right]\\
  & = \frac{2}{\tau_j} \exp\left(-\frac{2}{\tau_j} t\right) \\   
   &\qquad \left[ \eta \frac{\tau_j^2}{4}  \frac{2\bar{\varepsilon}}{\tau_j \eta}  \exp\left( \frac{2\bar{\varepsilon}}{\tau_j \eta} \right) - \eta \frac{\tau_j^2}{4} \exp\left( \frac{2\bar{\varepsilon}}{\tau_j \eta} \right) + \eta \frac{\tau_j^2}{4} + \bar{\varepsilon}\frac{\tau_j}{2} \exp\left( \frac{2}{\tau_j} t \right) - \bar{\varepsilon} \frac{\tau_j}{2} \exp \left( \frac{2\bar{\varepsilon}}{\tau_j \eta} \right) \right] \\
   &=\exp\left( -\frac{2}{\tau_j t} \right) \left[ \exp\left( \frac{2\bar{\varepsilon}}{\tau_j \eta} \right) \left( \bar{\varepsilon} -\frac{\tau_j \eta}{2} -\bar{\varepsilon} \right) +\frac{\tau_j \eta}{2} + \exp\left( \frac{2}{\tau_j t} \right) \bar{\varepsilon} \right] \\
  &= -\frac{\tau_j \eta}{2} \exp \left( -\frac{2}{\tau_j} t + \frac{2 \bar{\varepsilon}}{\tau_j \eta} \right) +\frac{\tau_j \eta}{2} \exp \left( -\frac{2}{\tau_j} t  \right) + \bar{\varepsilon}  \\
  &=  \frac{\tau_j \eta}{2}\exp\left( -\frac{2}{\tau_j} t \right) \left[ 1- \exp\left( \frac{2\bar{\varepsilon}}{\tau_j \eta} \right)  \right] +\bar{\varepsilon}.
\end{align*}
\end{proof}

We use this result to obtain the stress responses of the individual Maxwell elements. This is also useful for an analysis of different displacement rates $\eta$ in Section \ref{sec:different strain rates}. 

\begin{proposition}\label{P-exact-sigma}
The stress $\sigma_j$ of the $j$-th Maxwell element with $j>1$ and the corresponding stiffness $\mu_j$ and relaxation time $\tau_j$ at time $t\in [0,T]$ is given as 
\begin{align}
\sigma_j(t)= \left\{\begin{array}{ll}  \frac{\mu_j \tau_j \eta}{2} \left( 1- \exp \left( -\frac{2}{\tau_j} t \right)  \right)  , &0 \leq t\leq \frac{\bar{\varepsilon}}{\eta} \\
       - \frac{\mu_j \tau_j \eta}{2}  \left( 1- \exp \left( -\frac{2 \bar{\varepsilon} }{\tau_j \eta }  \right)  \right) \exp \left( -\frac{2}{\tau_j} t \right), &  \frac{\bar{\varepsilon}}{\eta} <t \leq T. \end{array}\right. 
        \label{eqn:stress_j}
\end{align}
The stress of the single spring, denoted by $\sigma_{j=0}$, can be specified with the corresponding stiffness $\mu$ as
\begin{align*}
    \sigma_0(t)= \left\{\begin{array}{ll} \mu \eta t  , &0 \leq t\leq \frac{\bar{\varepsilon}}{\eta} \\
       \mu \bar{\varepsilon}, &  \frac{\bar{\varepsilon}}{\eta} <t \leq T. \end{array}\right. 
\end{align*}

According to these calculations, the total stress can be written as the sum of the stresses of the single spring and the Maxwell elements, i.e.
\begin{align}
    \sigma (t)= \sum \limits_{j=0}^n \sigma_j(t) = \left\{\begin{array}{ll} \mu \eta t + \sum \limits_{j=1}^n \frac{\mu_j \tau_j \eta}{2} \left( 1- \exp\left( -\frac{2}{\tau_j} t \right)  \right) , &0 \leq t\leq \frac{\bar{\varepsilon}}{\eta} \\
      \mu \bar{\varepsilon} - \sum \limits_{j=1}^n \frac{\mu_j \tau_j \eta}{2}  \left( 1- \exp \left( -\frac{2 \bar{\varepsilon} }{\tau_j \eta }  \right)  \right) \exp \left( -\frac{2}{\tau_j} t \right) , &  \frac{\bar{\varepsilon}}{\eta} <t \leq T. \end{array}\right. 
      \label{eq: sigmaTotal}
\end{align}

\end{proposition}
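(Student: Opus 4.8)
The plan is to read off each elementary stress contribution directly from the constitutive relation \eqref{eqn:stressComputation}. Comparing \eqref{eqn:stressComputation} with the announced decomposition $\sigma(t) = \sum_{j=0}^{n} \sigma_j(t)$ forces the identifications $\sigma_0(t) = \mu\,\varepsilon(t)$ for the single spring and $\sigma_j(t) = \mu_j\bigl(\varepsilon(t) - \varepsilon_j^{\,i}(t)\bigr)$ for $j \ge 1$. Everything then reduces to inserting the explicit strain \eqref{eqn:strain} and the inelastic strain \eqref{eqn:strain_inelastic} and simplifying; no new integration is required, since the genuine analytic work was already carried out in the preceding proposition.

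First I would dispatch the single spring: substituting \eqref{eqn:strain} into $\sigma_0(t) = \mu\,\varepsilon(t)$ immediately gives $\mu\eta t$ on $[0,\bar{\varepsilon}/\eta]$ and $\mu\bar{\varepsilon}$ on $(\bar{\varepsilon}/\eta,T]$, which is the stated piecewise formula for $\sigma_0$ with no further manipulation.

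Next I would treat $\sigma_j$ for $j \ge 1$ regime by regime. On the loading interval $0 \le t \le \bar{\varepsilon}/\eta$ I substitute $\varepsilon(t) = \eta t$ together with the first branch of \eqref{eqn:strain_inelastic}; the crucial observation is that $\eta t$ enters with opposite signs and cancels, leaving $\varepsilon(t) - \varepsilon_j^{\,i}(t) = \tfrac{\tau_j}{2}\eta\bigl(1 - \exp(-\tfrac{2}{\tau_j}t)\bigr)$, so multiplication by $\mu_j$ yields the first branch of \eqref{eqn:stress_j}. On the relaxation interval $\bar{\varepsilon}/\eta < t \le T$ I substitute $\varepsilon(t) = \bar{\varepsilon}$ and the second branch of \eqref{eqn:strain_inelastic}; the constant $\bar{\varepsilon}$ cancels exactly, so that only the exponential prefactor survives. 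The single genuinely fiddly point — the \emph{main obstacle}, modest as it is — is the bookkeeping of the exponential factors: the surviving term, built from $\exp(-\tfrac{2}{\tau_j}t)\bigl[1 - \exp(\tfrac{2\bar{\varepsilon}}{\tau_j\eta})\bigr]$, must be recast into the form stated in \eqref{eqn:stress_j} by absorbing the sign into the bracket and recombining the exponentials. I would cross-check this recasting by evaluating both branches at the switching time $t = \bar{\varepsilon}/\eta$ and confirming that $\sigma_j$ is continuous there, which pins down the correct placement of signs and exponents.

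Finally, summing the now-explicit contributions over $j = 0, 1, \dots, n$ gives the total stress \eqref{eq: sigmaTotal}: the spring term $\sigma_0$ supplies the leading $\mu\eta t$, respectively $\mu\bar{\varepsilon}$, while the Maxwell terms contribute the finite sum of exponentials. As the collection is finite this last step is purely a matter of linearity and requires no additional estimate.
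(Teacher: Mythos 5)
Your strategy coincides with the paper's own proof: read off $\sigma_0=\mu\,\varepsilon$ and $\sigma_j=\mu_j(\varepsilon-\varepsilon_j^{\,i})$ from \eqref{eqn:stressComputation}, substitute \eqref{eqn:strain} and \eqref{eqn:strain_inelastic} branch by branch, and sum. The spring and the loading branch are fine. However, the one step you defer as ``fiddly bookkeeping'' does not actually go through. After the cancellation of $\bar{\varepsilon}$ on the relaxation interval the surviving term is
\begin{equation*}
\sigma_j(t)=-\frac{\mu_j\tau_j\eta}{2}\exp\Bigl(-\tfrac{2}{\tau_j}t\Bigr)\Bigl[1-\exp\Bigl(\tfrac{2\bar{\varepsilon}}{\tau_j\eta}\Bigr)\Bigr]
=\frac{\mu_j\tau_j\eta}{2}\Bigl(\exp\Bigl(\tfrac{2\bar{\varepsilon}}{\tau_j\eta}\Bigr)-1\Bigr)\exp\Bigl(-\tfrac{2}{\tau_j}t\Bigr),
\end{equation*}
which is positive and is \emph{not} equal to the second branch of \eqref{eqn:stress_j}: the two expressions differ by the factor $-\exp(-2\bar{\varepsilon}/(\tau_j\eta))$, so in particular they have opposite signs. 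No amount of ``absorbing the sign into the bracket and recombining exponentials'' turns $1-\exp(+2\bar{\varepsilon}/(\tau_j\eta))$ into $-(1-\exp(-2\bar{\varepsilon}/(\tau_j\eta)))$; that is not an identity. The continuity cross-check you propose would in fact expose this: at $t=\bar{\varepsilon}/\eta$ the displayed expression matches the first branch, $\frac{\mu_j\tau_j\eta}{2}(1-\exp(-2\bar{\varepsilon}/(\tau_j\eta)))$, whereas the printed second branch evaluates to $-\exp(-2\bar{\varepsilon}/(\tau_j\eta))$ times that value, i.e.\ the formula as printed is discontinuous at the switching time and negative throughout relaxation. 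The correct closed form is $\frac{\mu_j\tau_j\eta}{2}\bigl(1-\exp(-\frac{2\bar{\varepsilon}}{\tau_j\eta})\bigr)\exp\bigl(-\frac{2}{\tau_j}(t-\frac{\bar{\varepsilon}}{\eta})\bigr)$; the printed version has lost the time shift $t\mapsto t-\bar{\varepsilon}/\eta$ and acquired a spurious minus sign. This is a defect of the target formula rather than of your method --- the paper's own proof performs the identical substitution and then silently flips the sign of the exponent in its final displayed equality --- but your write-up asserts that the recasting succeeds, which it cannot. The same correction propagates to the second branch of \eqref{eq: sigmaTotal}.
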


\begin{proof}
We start with the stress of the single spring. This is calculated by
$\sigma_0(t)=\mu \varepsilon(t)$. Inserting \eqref{eqn:strain} for the different time intervals gives us the stress as shown above.
The stress of the Maxwell elements is calculated by $\sigma_j(t)=\mu_j(\varepsilon(t)- \varepsilon_j^i(t) )$ as given in equation \eqref{eqn:stressComputation}. 
By considering the different time intervals we can insert both, the strain $\varepsilon(t)$ and the inelastic strain $\varepsilon_j^i(t)$ of the Maxwell element.
Hence, for $0 \leq t\leq \frac{\bar{\varepsilon}}{\eta}$ we obtain
\begin{align*}
    \sigma_j(t)=\mu_j \left( \eta t -\frac{\tau_j}{2}\eta \exp\left( -\frac{2}{\tau_j} t \right)-\eta t +\frac{\tau_j}{2}\eta \right) 
    = \frac{\mu_j \tau_j \eta}{2} \left( 1-\exp\left( -\frac{2}{\tau_j} t \right) \right).
\end{align*}
Similarly, for $\frac{\bar{\varepsilon}}{\eta} <t \leq T$, we obtain 
\begin{align*}
    \sigma_j(t)&=\mu_j \left( \bar{\varepsilon} - \frac{\tau_j \eta}{2}\exp\left( -\frac{2}{\tau_j} t \right) \left[ 1- \exp\left( \frac{2\bar{\varepsilon}}{\tau_j \eta} \right)  \right] -\bar{\varepsilon} \right) \\
    &=  - \frac{\mu_j \tau_j \eta}{2}  \left( 1- \exp \left( -\frac{2 \bar{\varepsilon} }{\tau_j \eta }  \right)  \right) \exp \left( -\frac{2}{\tau_j} t \right).
\end{align*}
The representation of the total stress \eqref{eq: sigmaTotal} follows directly by inserting \eqref{eqn:stress_j} in \eqref{eqn:stressComputation}.
\end{proof}

Since the stress can be expressed in terms of the material parameters, we are able to formulate the forward operator \eqref{eqn: Forward operator}. It is still solution-dependent, because we still need the number of Maxwell elements to calculate the stress, but this is unknown. 
Before we address this topic, we will briefly discuss the simulation of data, where the above results will be helpful. 


\section{Data simulation}
\label{sec:data-simulation}


We simulate a relaxation experiment where a strain is applied to a material. This strain is completely determined by the parameters $\eta$ (displacement rate) and $\bar{\varepsilon}$ (maximum strain).  So, if we choose these two parameters and a time period $[0,T]$, we have completely determined the strain using the representation \eqref{eqn:strain}. This function can then be supposed to be known for all $t\in [0,T]$. 
In Figure \ref{abb:strains}, we see strains for three different displacement rates $\eta$ and the maximum strain $\bar{\varepsilon}=20\,\%$. The fastest displacement rate is $10$ mm/s  with a maximum strain being attained after $2$ seconds. For the slowest displacement rate of $1$ mm/s, the $20 \%$ strain is only achieved after $20$ seconds. We chose $T=100$ seconds. 

\begin{figure}[h]
\centering
\def\svgwidth{.7\linewidth}
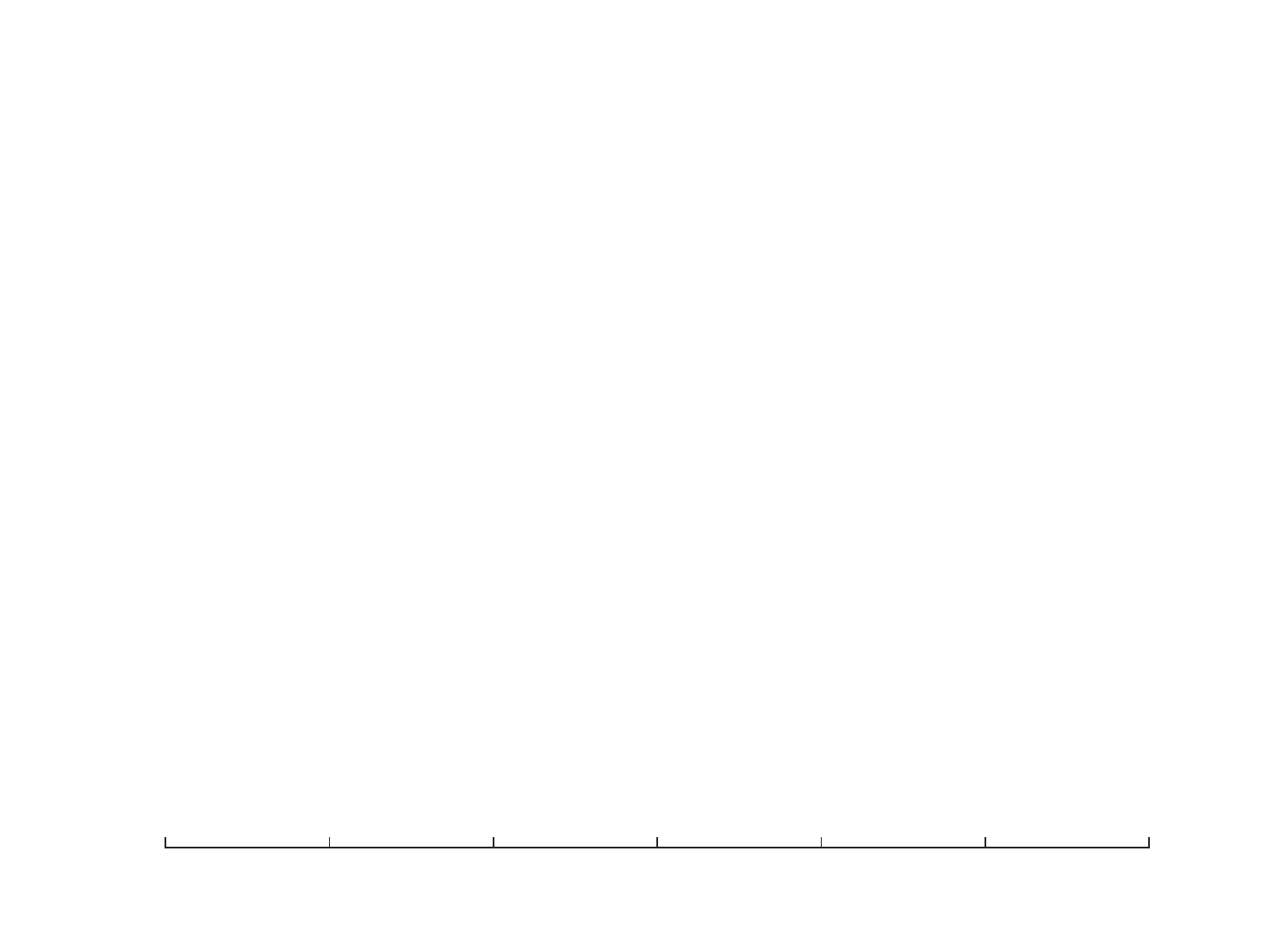
\caption{Strain-time curves for different displacement rates $\eta$}
\label{abb:strains}
\end{figure}

For the simulation of the stress function we then choose the number of Maxwell elements $n$ and the material parameters $\left(\mu, \mu_1, \dots, \mu_n, \tau_1, \dots \tau_n \right) $. Thus, we can solve the forward problem with equation \eqref{eq: sigmaTotal} obtaining the stress function $\sigma$.
In Figure \ref{abb:stress} we see the stress functions corresponding to the strains with different displacement rates $\eta$ of Figure \ref{abb:strains}. Here, we have selected one spring and three Maxwell elements with the material parameters $\mu=10$, $(\mu_1,\tau_1)= (4, 0.2)$, $(\mu_2,\tau_2)= (7, 3.7)$ and $(\mu_3,\tau_3)= (1, 25)$. The stiffnesses $\mu$ and $\mu_j$ have the unit MPa and the unit of the relaxation times of the different dampers are seconds. The relaxation times were selected in three different decades to simulate a relaxation time spectrum, whereas the stiffness for each of the elements were selected randomly to ensure no correlation between the relaxation time and the stiffness.
Since the Maxwell elements are connected in parallel in the rheological model as shown in Figure \ref{abb:rheologicalModel}, the order of the Maxwell elements is not important. This is also evident in the forward model \eqref{eq: sigmaTotal}, since we simply have a sum of the stress contributions of the individual Maxwell elements and the single spring. 
So we stick to the convention to always number them according to the order of the relaxation times.

\begin{figure}[h]
\centering
\def\svgwidth{.7\linewidth}
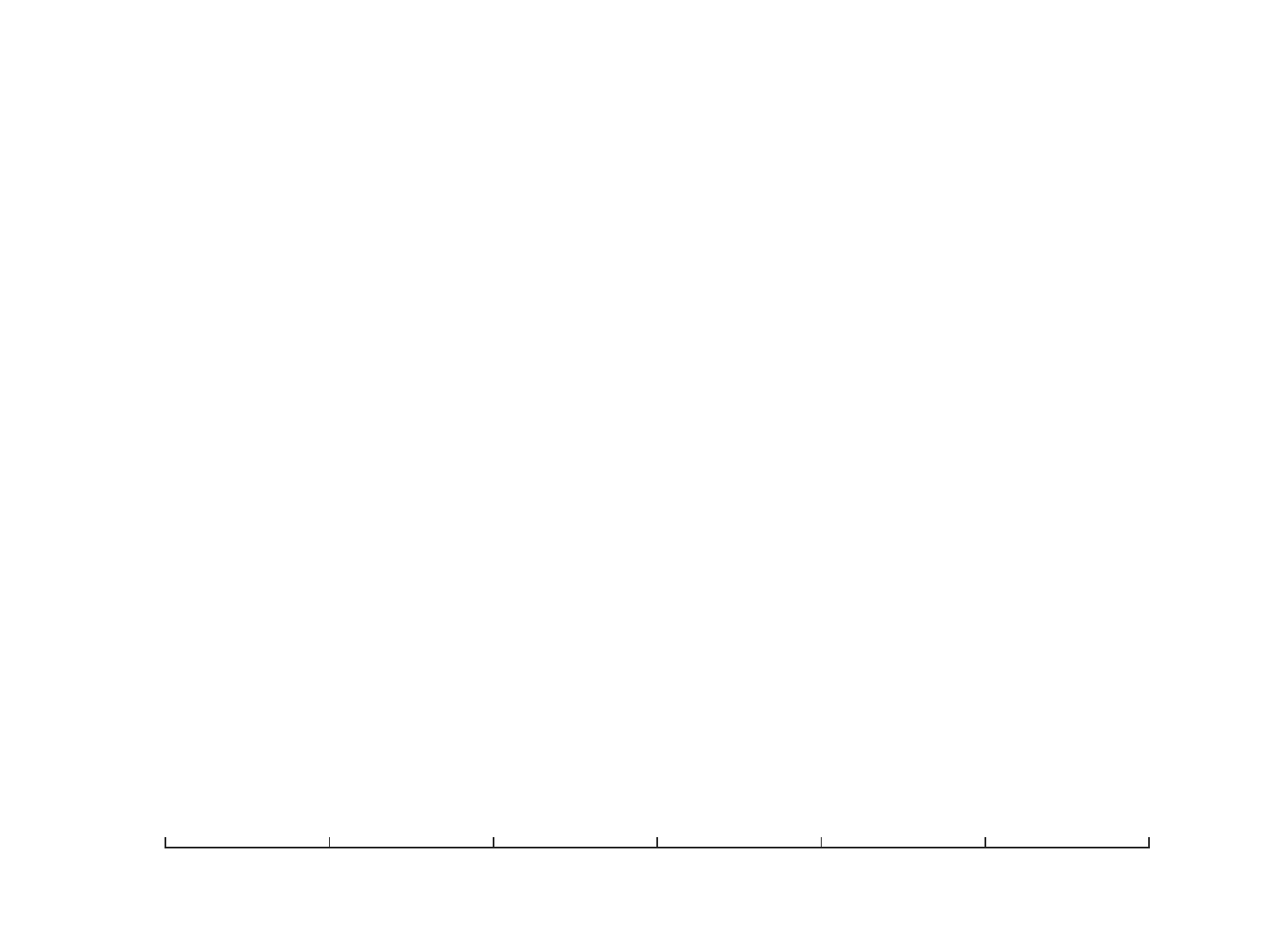
\caption{Synthetic stress-time data produced by a model consisting of a spring combined with three Maxwell elements for three different strain rates}
\label{abb:stress}
\end{figure}

For the calculation of $\sigma$ we have to discretize with respect to time.  We choose $m+1$ as the number of time steps $t_i=i \cdot \frac{T}{m}$ for $i=0, \dots, m$ in the interval $[0,T]$ and thus also have a discretized representation of the stress function with $\left( \sigma(t_0), \dots, \sigma(t_m) \right) \in \mathbb{R}^{m+1}$. To avoid an inverse crime, one should choose different discretizations for the direct and inverse problem. But since we can analytically compute the forward problem depending on the number of Maxwell elements $n$, this problem is not apparent. In Section \ref{sec:NumericalResults} we discuss results of experiments and show various scenarios that lead to reconstructions of different accuracy concerning the material parameters.

For any ill-posed inverse problem, reconstruction is considerably more difficult in case of noisy data $\sigma^\delta$. In this regard, we will also see different results in Section \ref{sec:NumericalResults}.
We simulate noise by adding scaled standard normally distributed random numbers to the discretized stress vector such that $\lVert \sigma - \sigma^\delta\rVert < \delta$ with noise level $\delta >0$. A noise level $\delta=0$ corresponds to unperturbed data.


\section{Solving the inverse problem} 
\label{sec: Cluster}


In this section we develop a clustering algorithm, which allows us to overcome the fact that the forward operator depends on the unknown number of Maxwell elements. 
Currently we cannot compute the forward operator without a fixed number of Maxwell elements.
Together with the ambiguity of our forward operator, this contributes to the ill-posedness of the problem. To illustrate the mentioned ambiguity, we will consider the following example.

\begin{example}
For a material described by two Maxwell elements with relaxation times $\tau_1=\tau_2$ and stiffnesses $\mu_1,\mu_2$ we have $\varepsilon_1^i=\varepsilon_2^i$. 
We furthermore consider another material described by the same basic stiffness but only one Maxwell element with relaxation time $\tilde{\tau_1}=\tau_1=\tau_2$ and stiffness $\tilde{\mu_1}$. 
If additionally $\tilde{\mu_1}=\mu_1+\mu_2$ applies, there is no possibility to distinguish these two materials by means of the stress created by our forward operator. 
The two stress-time curves are exactly the same with no possibility to reconstruct the parameters unambiguously. 
\end{example}

This shows, how to construct different tuples $(\mu, \mu_1, \dots, \mu_n, \tau_1,\ldots, \tau_n, n)$, such that 
$$F_n\left(\mu, \mu_1, \dots, \mu_n, \tau_1, \dots \tau_n, n \right)=\sigma$$
for a specific $\sigma \in L_2([0,T])$. For this reason we make further requirements. A common assumption is that the occurring relaxation times lie in different decades \cite{goldschmidt2015}. For instance, if $\tau_l \in [10,100]$ for $l \in \{ 1,\dots , N \}$, then  $\tau_j \notin [10,100]$ for all $j\in \{1, \dots,l-1,l+1, \dots N \}$.
This information will help us to solve the ill-posed problem.

We set a maximum number of Maxwell elements $N$, such that the unknown actual number of Maxwell elements $n$ is smaller or equal to $N$. Then, we reconstruct the desired parameters $(\mu, \mu_1, \dots, \mu_N, \tau_1, \dots \tau_N)$ with a minimization algorithm, obtaining the stiffness parameter for the single spring as well as stiffnesses and relaxation times for $N$ Maxwell elements. 
Of course, at this point there could be too many parameters, because the number of Maxwell elements could be too large. Nevertheless these values are helpful.  
They do not fulfill the requirement of having the relaxation times in different decades yet. Therefore, we apply a clustering algorithm to these parameters, which clusters them according to this requirement. 
That means our reconstruction consists of two parts:
\begin{itemize}
\item minimization algorithm with $N$ Maxwell elements,
\item clustering algorithm which reduces $N$ to $n$ Maxwell elements.
\end{itemize}
We describe this in more detail in the following.

First we will explain the minimization process. 
At this point we are faced with the following problem: We have applied a known strain to a material, measured the generated stress and want to deduce the parameters of the material. 
To this end we minimize the residual
\begin{align} \label{eqn: Residuum}
R= \lVert F_N\left(\mu, \mu_1, \dots, \mu_N, \tau_1, \dots \tau_N,N\right) - \sigma \rVert_2^2.
\end{align}  
This minimization process is done with $N$ Maxwell elements. Since the problem is nonlinear with respect to the relaxation times, we use an iterative solution method with several starting values for the stiffnesses and relaxation times 
$(\mu^{(0)}, \mu_1^{(0)}, \dots, \mu_N^{(0)}, \tau_1^{(0)}, \dots \tau_N^{(0)})$. 
The starting values are distributed over the possible range of parameter values. The usage of several starting values is necessary because of the nonlinearity of the problem which in general causes the existence of several local minima. 
For each of those starting values we then apply the forward operator to these parameters and calculate \eqref{eqn: Residuum}. This gives us an estimate of how well our initial values match the desired parameters. We change the starting values and the process starts again in a new step. 
This way the residual
\begin{align*}
R^{(k)}= \Big\lVert F_N\big(\mu^{(k)}, \mu_1^{(k)}, \dots, \mu_N^{(k)}, \tau_1^{(k)}, \dots \tau_N^{(k)} ,N\big) - \sigma \Big\rVert_2^2
\end{align*} 
with the iteration index $k=0,1,\dots, K$ is recalculated in each step, which allows us to evaluate the changes in the parameters. For this process we use the function 'lsqnonlin' and 'MultiStart' provided in the MATLAB Optimization Toolbox \cite{mathworks2005matlab}.

Next we look at the clustering algorithm mentioned before.
For a decade $[10^k, 10^{k+1}]$ with any $k\in \mathbb{N}$ we consider the index set 
\begin{align*}
J_k:=\lbrace j \in \lbrace 1,\dots, N \rbrace: \tau_j \in [10^k, 10^{k+1}]   \rbrace.
\end{align*}
Of course, depending on the application, other intervals instead of decades can also be of interest. All pairs $(\mu_j,\tau_j)$, $j\in J_k$, must be assigned to one Maxwell element. If we cluster the relaxation times, and thus the Maxwell elements, into index sets, the number of non-empty index sets gives us the actual number $n$ of Maxwell elements in the material. There are two ways to continue: 
\begin{itemize}
    \item[1)] Start the minimization process again with the actual number of Maxwell elements $n$. 
    \item[2)] Calculate suitable parameters $(\tilde{\mu}_k, \tilde{\tau}_k)$ with $\tilde{\tau}_k \in [10^k, 10^{k+1}]$ from the already reconstructed parameters through  
    \begin{align}
    \tilde{\mu}_k:= \sum_{j\in J_k} \mu_j, \qquad \tilde{\tau}_k:= \sum_{j\in J_k} \frac{\mu_j}{\tilde{\mu}_k} \tau_j.
    \label{eq:cluster calculation}
    \end{align}
\end{itemize}
Point 2) has of course the advantage that there is no need for a new elaborate minimization process. The drawback is that there is no analytic way to combine the parameters, so that the exact same stress curve is obtained by applying the forward operator. But, since we deal with a numerical approximation of the parameters, the calculation of the new parameters from already reconstructed ones can result in an accidental improvement. 

\begin{example} \label{example: Cluster}
We look at different cases and how they are treated by the clustering algorithm.
\begin{enumerate}
    \item Let $ J_k=\lbrace 1,3,5 \in \lbrace 1,\dots, N \rbrace: \tau_j \in [10^k, 10^{k+1}] \rbrace $ with $\tau_1=\tau_3=\tau_5$ and $N=6$. It is obvious that this should actually be a single Maxwell element.
Consider the forward operator with 
\begin{align*}
    \sigma(t) = \mu \, \varepsilon(t) + \sum\limits_{j=1}^{N}\, \mu_j \big(\varepsilon(t) - \varepsilon_j^{\,i}(t)\big).
\end{align*}
Since $\varepsilon_j^{i}$ varies only in $\tau_j$ for different Maxwell elements, it follows that $\varepsilon_1^{i}=\varepsilon_3^{i}=\varepsilon_5^{i}$ leading to
\begin{align}\label{forward_op}
    \sigma(t) = \mu \, \varepsilon(t) + (\mu_1+\mu_3+\mu_5)(\varepsilon(t) - \varepsilon_1^{i}(t))+ \sum\limits_{j=2,4,6}\, \mu_j (\varepsilon(t) - \varepsilon_j^{\,i}(t)).
\end{align}
Therefore, it is reasonable to choose the new values as
$$(\tilde{\mu}_k, \tilde{\tau}_k)= (\mu_1+\mu_3+\mu_5, \tau_1).$$
This is done by the clustering algorithm as
\begin{align*}
\tilde{\mu}_k&= \sum_{j\in J_k} \mu_j= \mu_1+\mu_3+\mu_5,\\
\tilde{\tau}_k&= 
\sum_{j\in J_k} \frac{\mu_j}{\tilde{\mu}_k} \tau_j= \frac{\mu_1+\mu_3+\mu_5}{\mu_1+\mu_3+\mu_5} \tau_1=\tau_1 .
\end{align*}

\item In the next case we want to illustrate the calculation of the relaxation times in the clustering algorithm.
Let $ J_k=\lbrace 1,3,5 \in \lbrace 1,\dots, 6 \rbrace: \tau_j \in [10^k, 10^{k+1}] \rbrace $ with $(\mu_1,\tau_1)=(7,4)$, $(\mu_3,\tau_3)=(1,4.5)$ and $(\mu_5,\tau_5)=(2,3.75)$, i.e. $\tau_1\approx \tau_3\approx \tau_5$. Then the stiffness values serve as a weighting of how important the corresponding relaxation value is. Regarding the forward operator 
\eqref{forward_op} we observe that, under these assumptions, the values of $\varepsilon(t) - \varepsilon_j^{\,i}(t)$, $t\in [0,T]$, for $j=1, 3, 5$ are quite similar, especially considering that other relaxation times lie in different decades. 
Thus, if one assumes that $\varepsilon_1^{i} \approx \varepsilon_3^{i} \approx \varepsilon_5^{i}$ applies, we see that the largest contribution among these three to the total stress is provided by the Maxwell element $(\mu_1,\tau_1)$, since $\mu_1 \gg\mu_5> \mu_3$. Therefore, we can assume that $\tau_1= 4$ provides a good approximation to the actual relaxation time. 
In order to include this weighting in more complicated examples, the clustering algorithm calculates $\tilde{\tau}_k$ as follows, instead of just calculating the mean value of all relaxation times,
\begin{align*}
\tilde{\mu}_k&= \sum_{j\in J_k} \mu_j= \mu_1+\mu_2+\mu_3=10,\\
\tilde{\tau}_k&= \sum_{j\in J_k} \frac{\mu_j}{\tilde{\mu}_k} \tau_j= \frac{7}{10}\cdot 4+ \frac{1}{10} \cdot 4.5+ \frac{2}{10} \cdot 3.75.= 4 . 
\end{align*}
\end{enumerate}
In the first case of our examples, this recalculation of the parameters by the clustering algorithm would not change anything in the generated stress curve. 
\end{example}

\section{Numerical results} \label{sec:NumericalResults}


In this section we show numerical results of our derived minimization and clustering algorithm. For the evaluation of our method we generate simulated data for known material parameters that serve as ground truth.
We choose a material described by one spring for basic elasticity and $n=3$ Maxwell elements and the following material parameters: 

\begin{table}[h]
\centering
	\caption{Selected material parameters to simulate data}
		\begin{tabular}{lcccc}\hline
			$j$&$0$ &$1$&$2$&$3$ \\ \hline
			 $\mu_j$ [MPa] &10&4&7&1 \\ \hline 
			 $\tau_j$ [s] &-&0.2&3.7&25 \\ \hline
		\end{tabular}
	\label{table:exact material parameters}
\end{table}

By solving the forward operator analytically (see Proposition \ref{P-exact-sigma}) we obtain the stress-time curve of the material, which we observe in the range $[0,100]$ seconds.
The parameters from Table \ref{table:exact material parameters} are to be reconstructed in all experiments. 
We discuss the difference between reconstructions from perturbed as well as exact data, different displacement rates and their effects. Finally we consider regularization, which is indispensable for perturbed data. 


\subsection{Exact data}


It will be shown that in case of exact data we can determine the material parameters very reliably. 
Table \ref{table:non_disturbed_data} shows the effects of the clustering algorithm.
While the minimization algorithm determines stiffnesses and relaxation times for the given maximum number of Maxwell elements (here $N=5$), the clustering algorithm can combine them accordingly and thus obtain the actual number $n=3$. Here we used the complete data set for $t \in [0,100]$ seconds and a displacement rate of $1$ mm/s, see Figure \ref{abb:strains}. 

\begin{table}[h]
\centering
	\caption{Reconstructed material parameters before and after clustering where the $\mu_j$ are given in MPa and the $\tau_j$ in seconds}
		\label{table:non_disturbed_data}
		\begin{tabular}{ccccccc}\hline
			$j=$&$0$ &$1$&$2$&$3$ & $4$ & $5$ \\ \hline
			reconstructed values \\ \hline
			 $\mu_j$ & 10.000 & 4.000 & 3.685 & 1.621 & 1.694 & 1.000 \\ \hline 
		      $\tau_j$ &-& 0.200 & 3.695 & 3.706 & 3.706 & 25.000 \\ \hline
		       after clustering \\ \hline
			 $\mu_j$ &10.000 & 4.000 & 7.000 & 1.000 \\ \hline
		 $\tau_j$ & - & 0.200 & 3.700 & 25.000 \\ \hline
		\end{tabular}
\end{table}

While after the calculation of the minimization algorithm there is only one Maxwell element with $\tau \in [0,1)$ and $\tau \in [10,100)$, we have three Maxwell elements with associated relaxation times in $[1,10)$. The clustering algorithm merges these elements and we can see that our reconstruction is accurate. The values in the table are rounded to three decimal places. 
After this rounding we no longer see any difference to the exact parameters after applying the clustering algorithm, see Table \ref{table:exact material parameters}. 
We illustrate what we have already mentioned in the second part of Example \ref{example: Cluster}. The relaxation time $\tau_2=3.695$ is closest to the current value of $3.7$ before clustering and we have $\mu_2 \gg \mu_4 > \mu_3$, which allows the clustering algorithm to use the weighting \eqref{eq:cluster calculation} leading to better results.


\subsection{Perturbed data} \label{sec: disturbed data}


We continue by investigating noisy data $\sigma^\delta$. These are generated by adding scaled standard normally distributed numbers to our discretized stress vector $\sigma$, such that
$\lVert \sigma- \sigma^\delta \rVert < \delta$ with noise level $\delta>0$. For better comparability we specify the relative noise level $\delta_{rel}$ by 
\begin{align*}
    \frac{\lVert \sigma- \sigma^\delta \rVert}{\lVert \sigma^\delta \rVert} < \delta_{rel}.
\end{align*} 
In our calculations we use the Euclidean norm and a relative noise level of $\delta_{rel} \approx 1 \%$.
Table \ref{table:noisy_data_10mm/s} shows different results of a reconstruction at a displacement rate of $10$\,mm/s using the clustering algorithm with $N=5$. To better assess the effect of the disturbance, we used several data sets for this purpose. That is, we have the same relative noise level $\delta_{rel}$, but different random numbers, which leads to slightly different data sets. We perform our reconstruction for each of these different data sets to identify possible weaknesses. 

	\begin{table}[h]
		\centering
			\caption{Approximated material parameters with similar noisy data, where the $\mu_j$ are given in MPa and the $\tau_j$ in seconds}
			\begin{tabular}{cccccc}\hline
				$j=$    && 	$0$	&$1$	&$2$	&$3$ 		\\ \hline
				exact values	&$\mu_j$		&10			&4						&7		&1 			\\   
								&$\tau_j$	&-			&0.2	&3.7	&25 		\\ \hline
				experiment 1&$\mu_j$		&9.9951		&38.558					&7.2098	&0.88371 		\\  
								&$\tau_j$	&-			& 0.019903	&3.7734	&27.795 		\\ \hline
				experiment 2	 &$\mu_j$		&10.005	&2.475					&6.9957	&1.0157 		\\  
							 	&$\tau_j$	&-			&0.29222	&3.6908	&24.2965 		\\ \hline
				experiment 3 &$\mu_j$		&9.99348	&44.351					&7.2084	&0.96407 		\\  
								&$\tau_j$	&-			&0.01344	&3.6444	&26.633 		\\ \hline
				experiment 4	&$\mu_j$		&10.003		&7.3458					&7.0398	&0.99887 		\\   
								&$\tau_j$	&-			&0.0993 & 3.6626	&24.0342 	\\ \hline
			\end{tabular}
					\label{table:noisy_data_10mm/s}
	\end{table}
	
We see that the reconstruction results differ, as expected, significantly from the exact values. However, it is obvious that the reconstructions of $(\mu_1,\tau_1)$ show a tremendous error, while the other parameters are computed rather stably. This confirms considerations of the authors in \cite{diebels2018identifying} where the authors have proven that the reconstruction of small relaxation times always is severely ill-conditioned and the condition deteriorates as $\tau\to 0$. Since stiffness and relaxation time are always to be considered in pairs, we see deviations in $\tau_1$ also in the corresponding stiffness $\mu_1$. As each Maxwell element has to provide a certain part to the total stress, too small values in one parameter are compensated by higher values in the other parameter to approach the total stress.

\begin{figure}[h]
	\centering
		\def\svgwidth{\textwidth}
		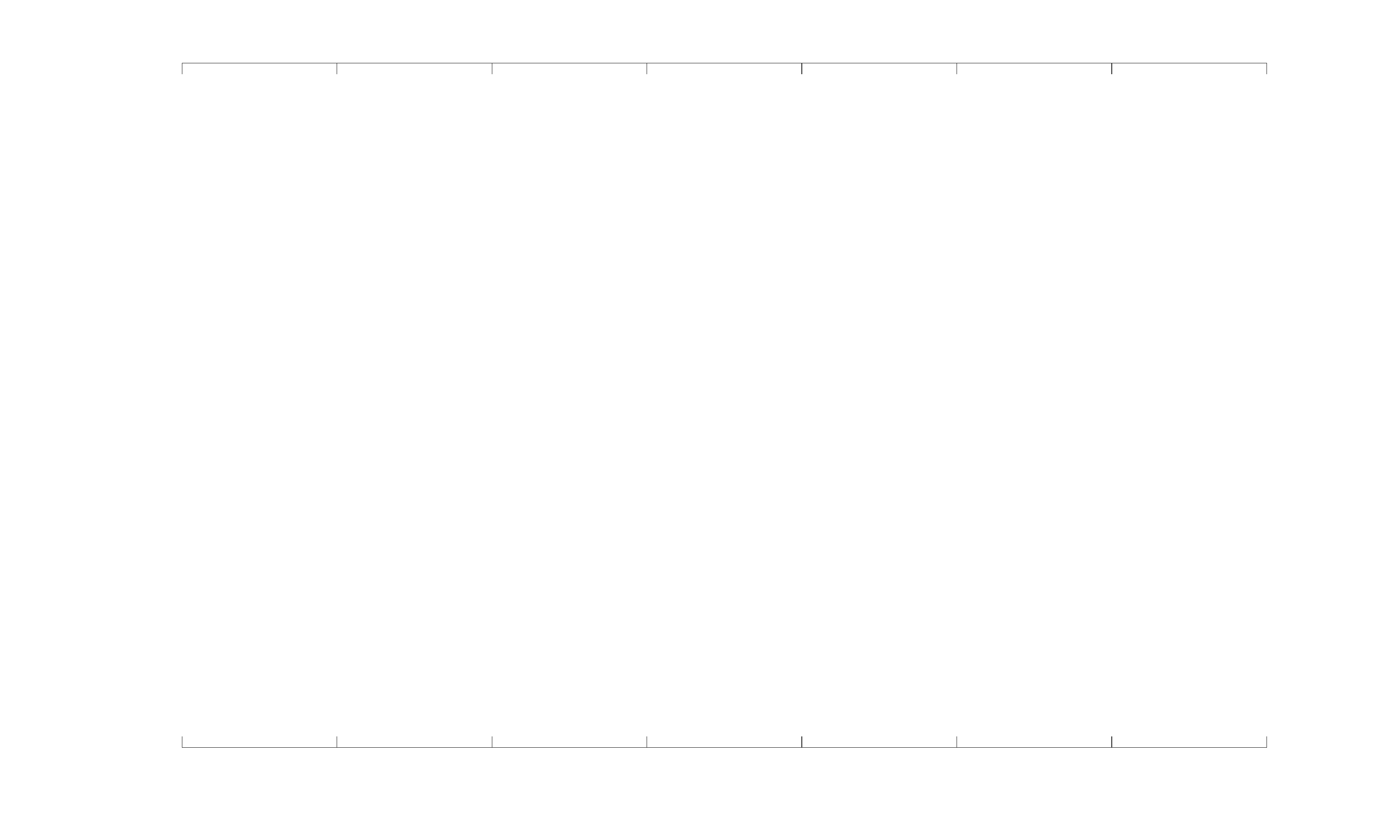
	\caption{Spread of the material parameters $(\mu_1,\tau_1)$ for 100 runs with different noise seeds, but same noise level $\delta_{rel}$ for $\eta$ = 10 mm/s}
	\label{fig:100runs_first_element_spread}
\end{figure}

We repeat this experiment with 100 different perturbed data sets with the same noise level and plot the values of $(\mu_1,\tau_1)$ in Figure \ref{fig:100runs_first_element_spread}.
We see that there is a very large variance in values spread over 100 different experiments. The values are distributed up to results such as $(\mu_1,\tau_1)=(66.75, 0.0127)$ or $(\mu_1,\tau_1)=(1.934, 0.7992)$. Of course, there are also results that come close to the exact values. The median of all values is $(0.1897,4.0135)$, which is not far from the exact values. 
Figure \ref{fig:100runs_first_element_spread} also shows that stiffness and relaxation time are inversely proportional to each other. If the relaxation time is estimated too small, we obtain much too large values for the stiffness. Similarly, if the relaxation time is too large, the stiffness is reconstructed too small. 

We summarize that the reconstruction from noisy data is, as expected, much worse in comparison to the results obtained for exact data. In particular, the Maxwell element with the smallest relaxation time (here: $(\mu_1,\tau_1)$) is most affected. This confirms stability investigations obtained in \cite{diebels2018identifying}.


\subsection{Analysis of different displacement rates}\label{sec:different strain rates}


So far we focused on experiments with the fastest displacement rate of $10$\,mm/s. In this subsection we discuss slower displacement rates. 
	\begin{table}[h]
	\centering
		\caption{Clustered material parameters with noisy data ($\eta = 1$ mm/s) where the $\mu_j$ are given in MPa and the $\tau_j$ in seconds}
		\begin{tabular}{c|c|c|c|c|c} \hline
			$j=$&&$0$ &$1$&$2$&$3$ \\  \hline
		exact values	&$\mu_j$		&10			&4		&7		&1 			\\ 
						&$\tau_j$	&-			&0.2	&3.7	&25 		\\ \hline 
			
		experiment 1	&$\mu_j$		&9.9945		&27.367	&5.945	&0.91663 		\\   
						&$\tau_j$	&-			&0.088687	&4.2318	&27.263 		\\ \hline
			
		experiment 2&$\mu_j$		&10.007		& - &7.5694	&1.0434 		\\ 
						&$\tau_j$	&-			&- &3.4762	&23.689			\\  \hline
			
		experiment 3&$\mu_j$		&9.0111		&2.6942		&7.2928	&1.0402	 	\\ 
						&$\tau_j$	&-			&0.82841		&6.3049	&3000.5	 	\\ \hline
			
		experiment 4 	&$\mu_j$		&10.006		&2.094		&6.0643	&1.0303	 	\\ 
						&$\tau_j$	&-			&0.885029		&3.9963	&23.423	 	\\ \hline
		\end{tabular}
			\label{table:four_res_strain1}
\end{table}

We compare Table \ref{table:noisy_data_10mm/s} with Table \ref{table:four_res_strain1}, in which we used a displacement rate of 10\,mm/s and 1\,mm/s, respectively. Again, we use $N=5$ for the minimization and a relative noise level of about 1\%, but four different noise seeds. It is obvious that the results are much worse, specifically the smallest relaxation time and the corresponding stiffness are approximated poorly. Again, this has to be expected since the reconstruction of small relaxation times is severely ill-conditioned. In a second run, the algorithm was not even able to find Maxwell elements with relaxation times in three different decades.

\begin{figure}[h]
\centering
\def\svgwidth{0.8\textwidth}
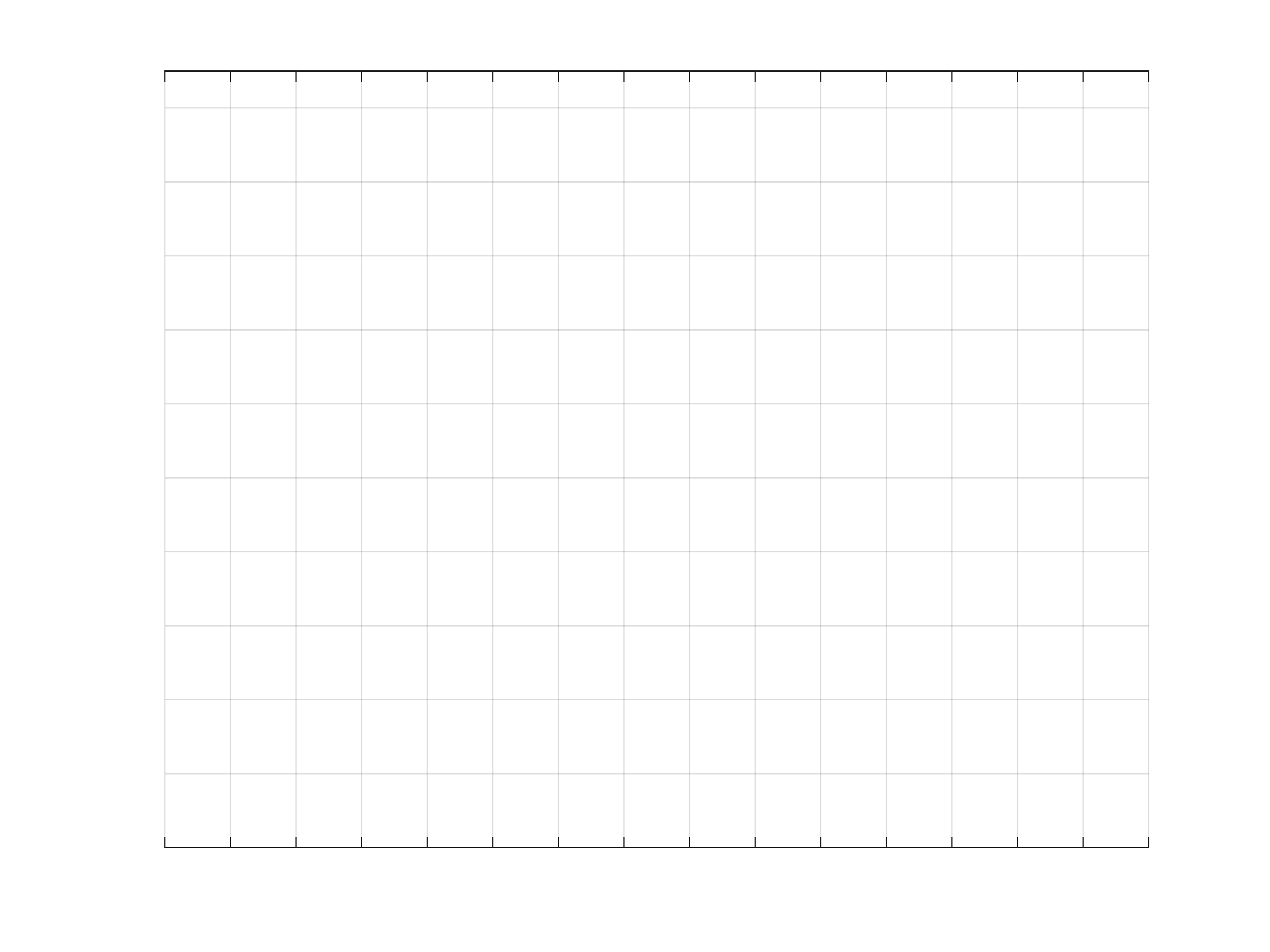
\caption{Individual stress components for a displacement rate of 10 mm/s}
\label{abb:10mmps}
\end{figure}

\begin{figure}[h]
\centering
\def\svgwidth{0.8\textwidth}
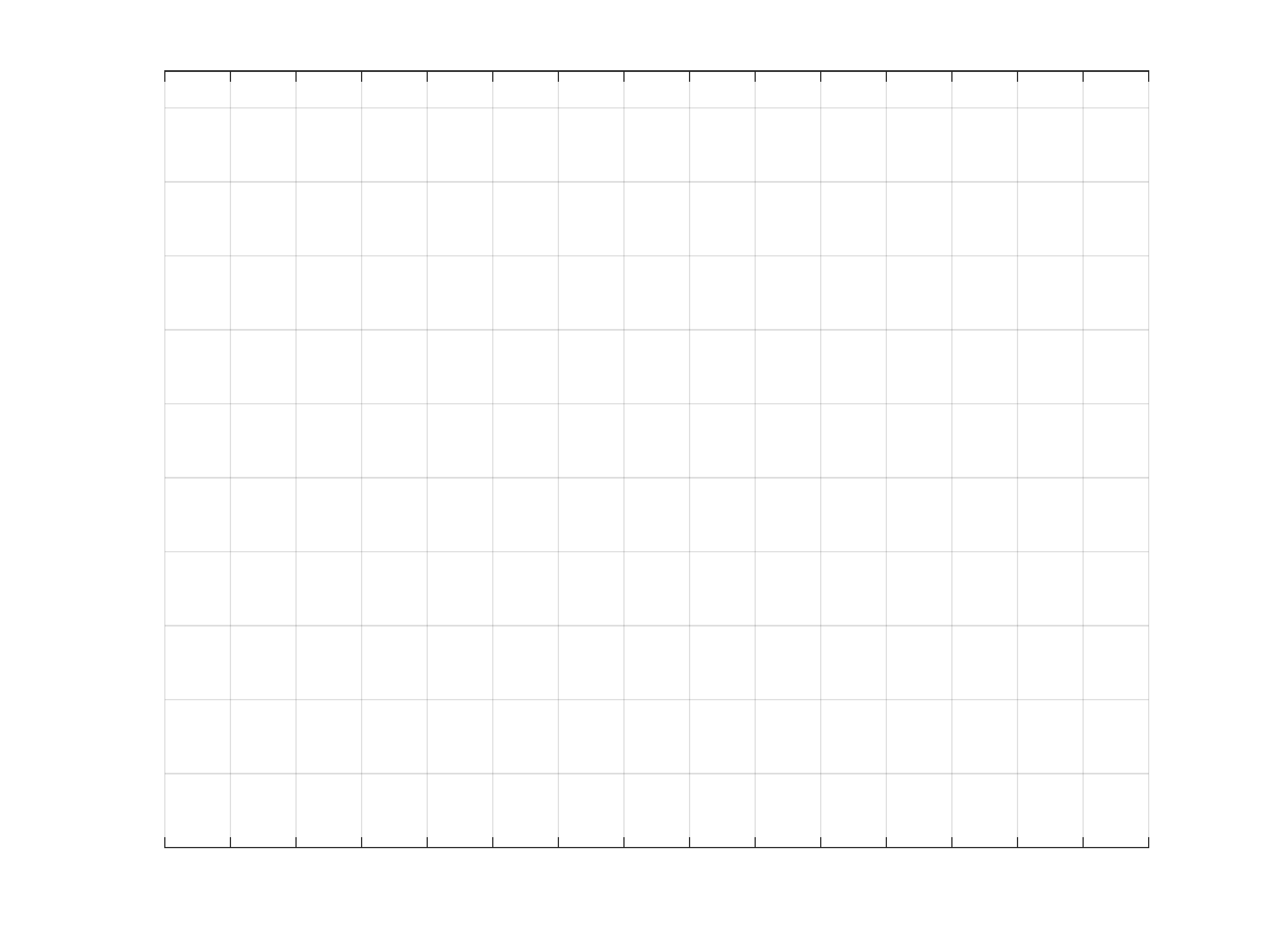
\caption{Individual stress components for a displacement rate of 1 mm/s }
\label{abb:1mmps}
\end{figure}

For a better understanding of why the results are worse compared to the results for the faster displacement rate of 10\,mm/s, we look at the different stress-time curves that a displacement rate of 1\,mm/s (Figure \ref{abb:10mmps}) and 10\,mm/s (Figure \ref{abb:10mmps}) generate. For each of the curves, we show the contribution of each element separately. 
That means $\sigma_0$ is the stress generated by the single spring while $\sigma_{1,2,3}$ is the stress of the three Maxwell elements. The sum of these individual stresses yields the total stress, cf. \eqref{eq: sigmaTotal}.
The maximum strain is 20 $\%$, which is attained at 20 seconds and 2 seconds for a displacement rate of 1 mm/s and 10 mm/s, respectively. This makes a big difference for the individual stresses of the Maxwell elements. In both cases the maximum stress of the single spring is achieved at 200 MPa, but at different time instances. 
However, the maximum stress of the Maxwell elements at a slower displacement rate of 1 mm/s is much lower than for a faster displacement rate. In both cases, the maximum is attained at $t=\bar{\varepsilon}/\eta$. In Table \ref{table: maxStress ME different strain rates}, we see the different values listed. 
At a slower displacement rate, the stress values of the Maxwell elements are non-zero over a longer time period, which can be seen particularly well for the first Maxwell element. But since the values are so much smaller than at a higher displacement rate, these small values are covered by noise much more easily. 
Therefore, it is advisable to use higher displacement rates. Based on this, we revert to a displacement rate of 10 mm/s in upcoming examples. 

\begin{table}	[h]
	\centering
		\caption{Maximum values of individual stress components [MPa] with slow and fast deformation rates attained at
		$t=\bar{\varepsilon}/\eta$}
	\begin{tabular}{c|c|c|c|c} \hline	&	&$j=1$	&$j=2$	&$j=3$ 			\\ \hline
	$\eta = 1$ 	&	$\sigma_j(\bar{\varepsilon}/\eta)$				&0.4	&12.94	&9.9 		\\
				 \hline
		
	$\eta = 10$	&	$\sigma_j(\bar{\varepsilon}/\eta)$				&4		&85.57	&18.48	 	\\ \hline 
	\end{tabular}
		\label{table: maxStress ME different strain rates}
\end{table}


\subsection{Regularization approaches} \label{sec: regularization}


As a result of our investigations in Section \ref{sec: disturbed data} we realized that small errors in the data can lead to large errors in the solution. This is a typical behavior of ill-posed inverse problems, which must be compensated by means of regularization techniques.
Therefore, instead of continuing to use as cost function the residual \eqref{eqn: Residuum} only, we will add a regularization term and minimize 
\begin{align} \label{eqn: Tikhonov}
\lVert F\left(\mu, \mu_1, \dots, \mu_N, \tau_1, \dots \tau_N\right) - \sigma^\delta \rVert_2^2 + \lambda \lVert \left(\mu, \mu_1, \dots, \mu_N, \tau_1, \dots \tau_N\right) \rVert_2^2,
\end{align} 
where $\lambda>0$ acts as regularization parameter and balances the influence of the data fitting term and the penalty term to the minimizer. This is the \emph{Tikhonov-Phillips regularization}, a standard approach in the field of inverse problems (see \cite{engl1989convergence,kaltenbacher2008iterative, neubauer1992tikhonov, scherzer1993use}). In Table \ref{tab:res_with_and_without_reg}, we see the comparison of two reconstructions with and without the regularization term. Again we used $N=5$ and the clustering algorithm for these results. We see a significant improvement on the first Maxwell element, which was previously most affected by the disturbed data. As expected, too large values are suppressed in the stiffness, but at the same time $\tau_1$ attains larger values. However, we also see a deterioration in the values of the third Maxwell element. Since large values are penalized by the regularization term, this mainly affects the relaxation time $\tau_3=25$ s. The values are lower and we get a higher stiffness $\mu_3=1.1312$ MPa and a smaller relaxation time $\tau_3=22.2532$ s. 

\begin{table}[h] 
	\centering
	\caption{Clustered material parameters with and without regularization where the $\mu_j$ are given in MPa and the $\tau_j$ in seconds}
	\label{tab:res_with_and_without_reg}
		\begin{tabular}{cccccc} \hline
						$j=$	&			&$0$ 		&$1$	&$2$	&$3$ 		\\ \hline  
		exact values		&$\mu_j$		&10			&4		&7		&1 			\\ 
							&$\tau_j$	&-			&0.2	&3.7	&25 		\\ \hline  
			
without						&$\mu_j$		&10.0003	&6.9146					&6.9963	&0.9909 		\\  
regularization				&$\tau_j$	&-			&0.1096	&3.7056	&25.3752 		\\ \hline 
			
		with				&$\mu_j$		&10.0065		&4.214 &6.9556	&1.1312	 	\\ 
		regularization		&$\tau_j$	&-			&0.1607	&3.5549	&22.2532	 	\\ \hline  
		\end{tabular}
\end{table}

To avoid this, we test another penalty term. We minimize
\begin{align} \label{eqn: Strafterm mu1}
\lVert F\left(\mu, \mu_1, \dots, \mu_N, \tau_1, \dots \tau_N\right) - \sigma^\delta \rVert_2^2 + \lambda \mu_1 ^2.
\end{align} 
Since $\mu_1$ is just a non-negative number in $\mathbb{R}$, we can omit the norm. 
The idea is to have the same positive effects as the classical Tikhonov-Phillips regularization, but without the negative influence on the largest relaxation time and thus the associated stiffness. 
In Figures \ref{abb:comp tau1 regularization} and \ref{abb:comp tau3 regularization} we compare all three methods. 
As in Section \ref{sec: disturbed data} we evaluate the different approaches in 100 experiments with the same noise level but different noise seeds. The clustered results are displayed in Figures \ref{abb:comp tau1 regularization} and \ref{abb:comp tau3 regularization}. In each box the central marker indicates the median, the lower and upper end of the box mark the 25th and 75th percentile, respectively. The outer boundaries mark the highest and smallest values. Outliers are marked by the red '+' symbols. 

\begin{figure}[h]
\centering
\def\svgwidth{0.7\textwidth}
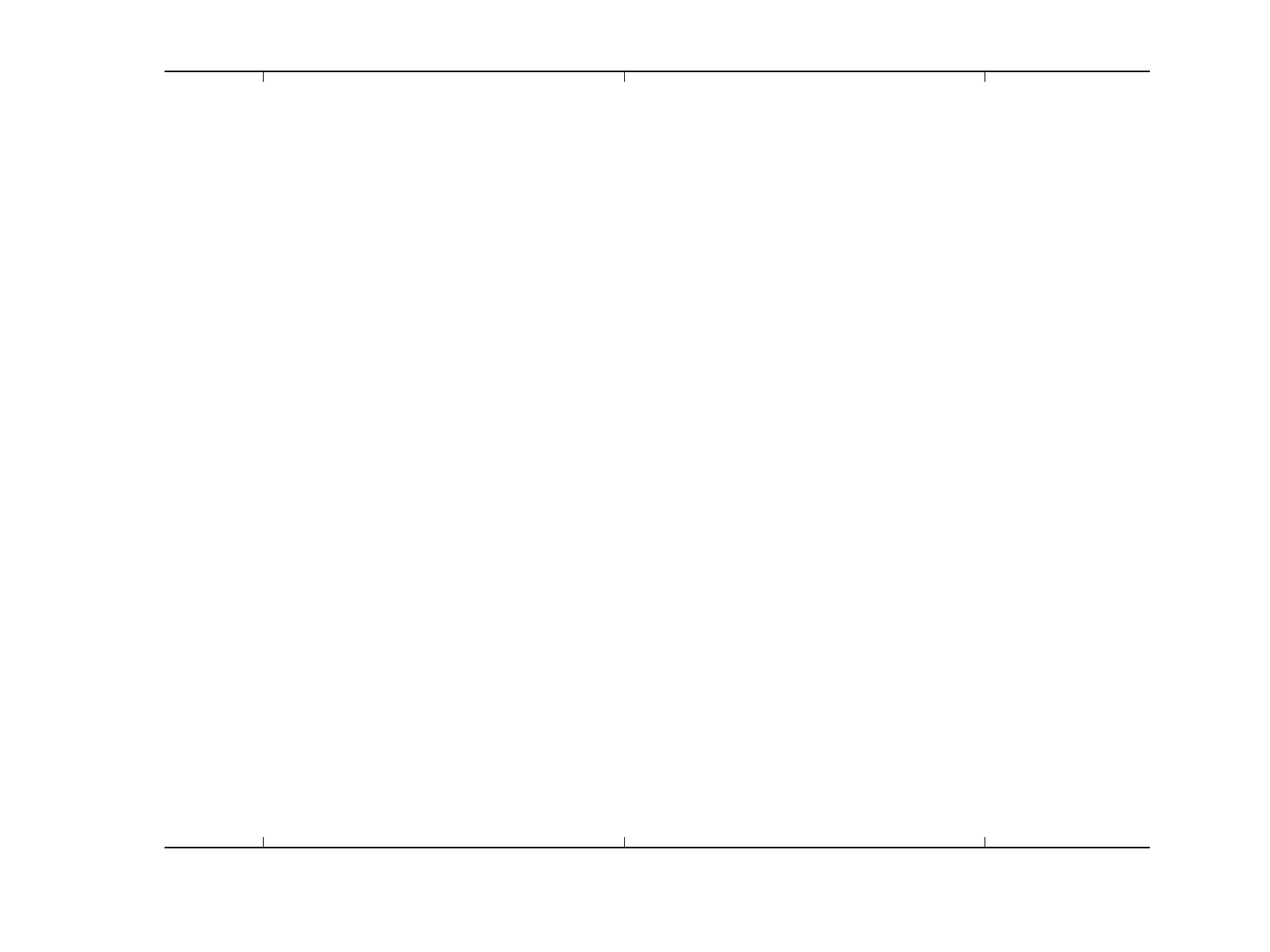
\caption{Smallest relaxation time determined from 100 runs with (a) no regularization, (b) classical Tikhonov-Phillips regularization and (c) adjusted regularization term}
\label{abb:comp tau1 regularization}
\end{figure}

\begin{figure}[h]
\centering
\def\svgwidth{0.7\textwidth}
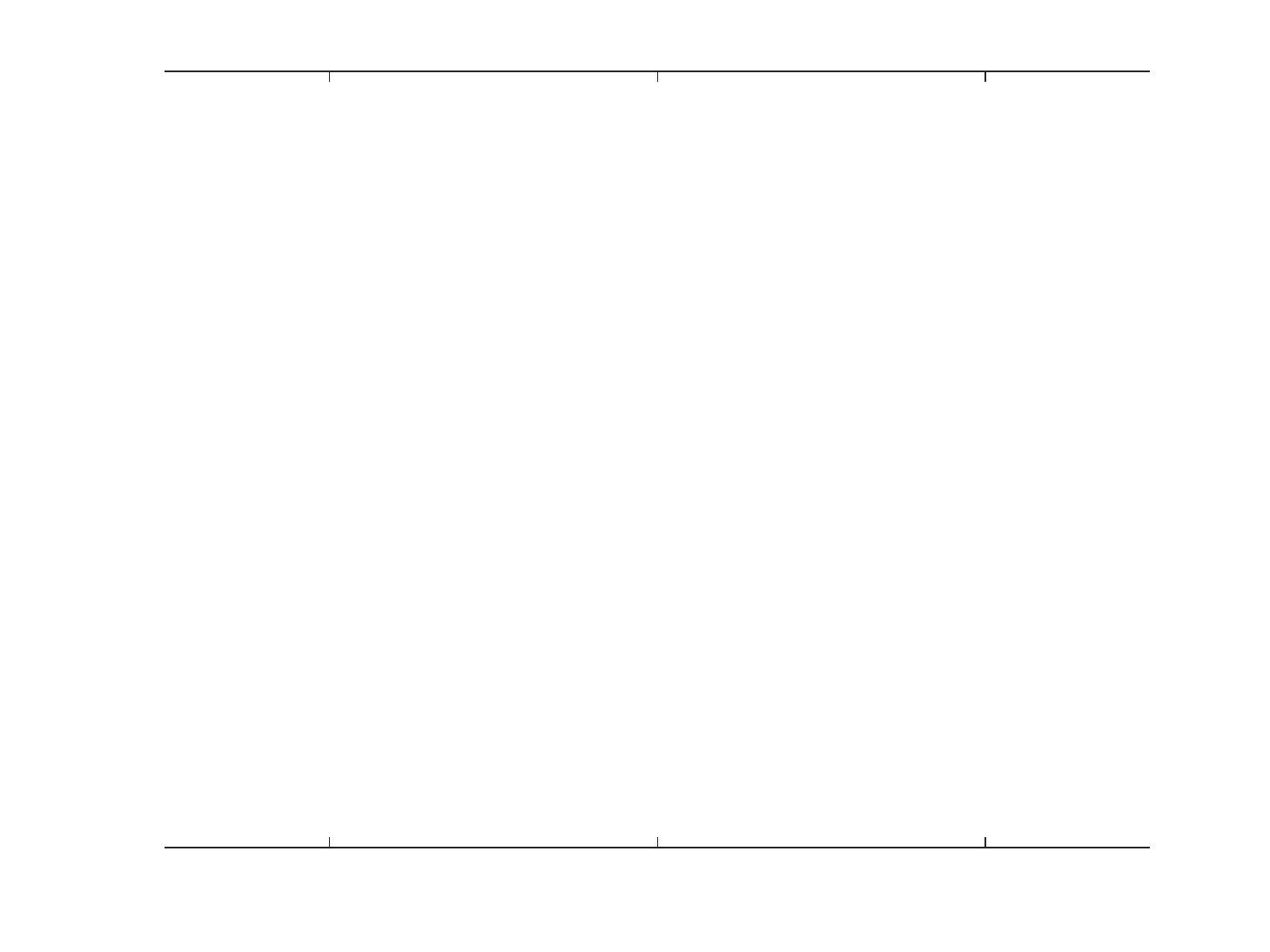
\caption{Largest relaxation time for 100 runs with (a) no regularization, (b) classical Tikhonov-Phillips regularization and (c) adjusted regularization term}
\label{abb:comp tau3 regularization}
\end{figure}


We see that we still manage to achieve a reasonable reconstruction of $\tau_1$ with the adjusted regularization term (see Figure \ref{abb:comp tau1 regularization} (c)). 
While $\tau_3$ was permanently reconstructed too small in the classical Tikhonov-Phillips regularization, we cannot observe such attenuation in (c), since we no longer penalize large values except $\mu_1$ (see Figure \ref{abb:comp tau3 regularization}). 


\subsection{Shortened data sets}


the last set of numerical experiments examines the effects of using a smaller number of data.
This is interesting in view of shortening the duration of the experiment or sampling the data at a lower frequency. 
It is particularly relevant for the reconstruction of Maxwell elements with high relaxation times. Since we do not know these, we cannot estimate how long it takes for the stress contribution of this element to decay to zero. In other words: It is unclear at which time a material finds its equilibrium. This is why in the following we will test reconstructions from shortened data sets in order to be able to make an assertion about the accuracy of these reconstructions.

We again work with noisy data ($\delta_{rel} \approx 1\%$), $N=5$, for the minimization and use the adapted penalty term \eqref{eqn: Strafterm mu1}.
Up to now, we have used a time interval of $[0,100]$ seconds with 1000 data points in our experiments, i.e. one data point any $0.1$ seconds.
We will now repeatedly shorten this data set by 5 seconds until we get a total time of 25 seconds. 
The results of the corresponding reconstructions are shown in Figure \ref{fig:shortenedData}. We focus on the basic elasticity $\mu$ and stiffness $\mu_3$ of the Maxwell element with slowest relaxation time and furthermore on the smallest and largest relaxation times $\tau_1$ and $\tau_3$. The behavior of the central Maxwell element is comparable to the third one, which is why we omit a detailed discussion.

\begin{figure}
     \centering
     \subfloat{\def\svgwidth{0.5\textwidth}
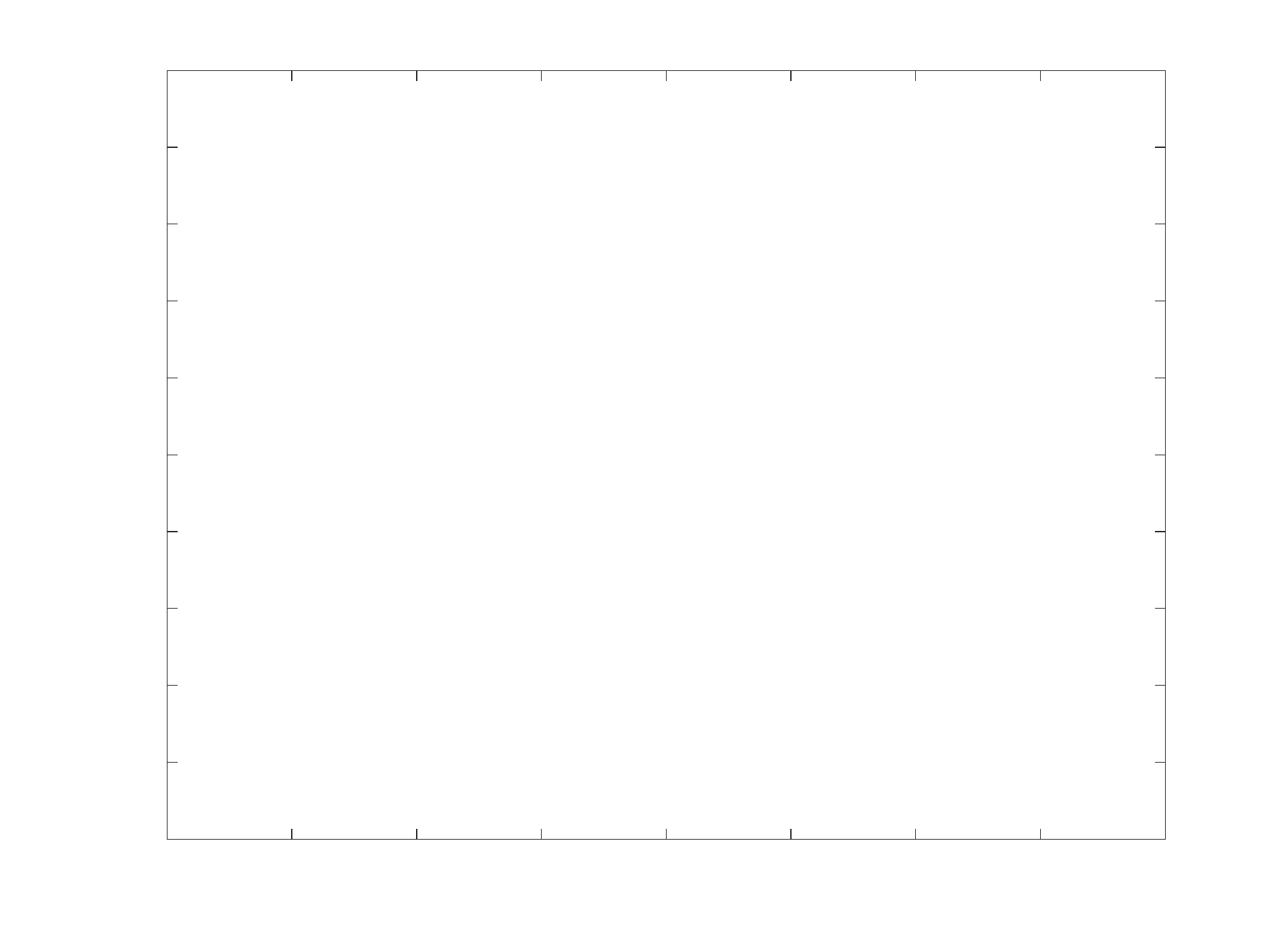}
     \subfloat{\def\svgwidth{0.5\textwidth}
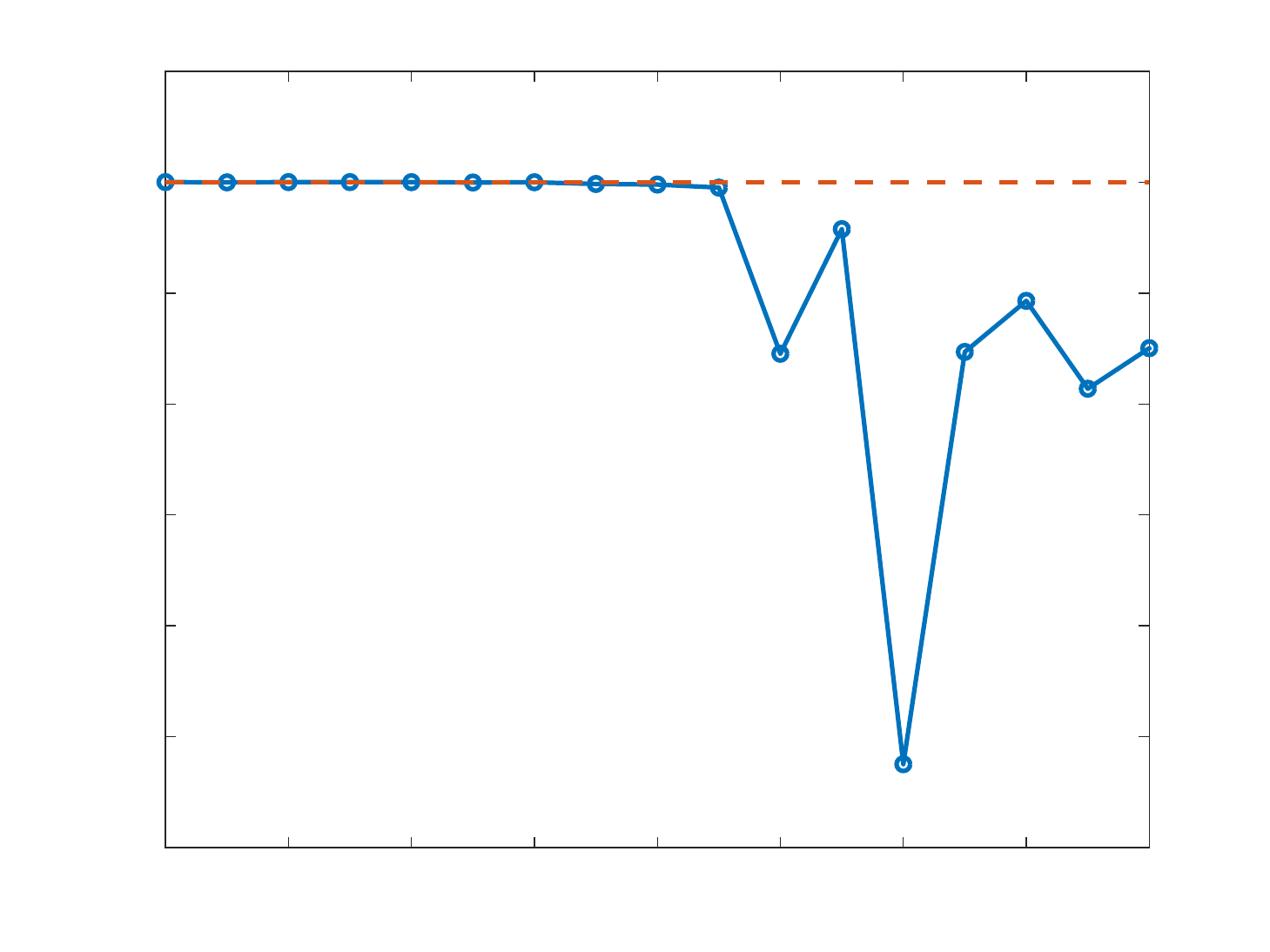}\\
     \subfloat{\def\svgwidth{0.5\textwidth}
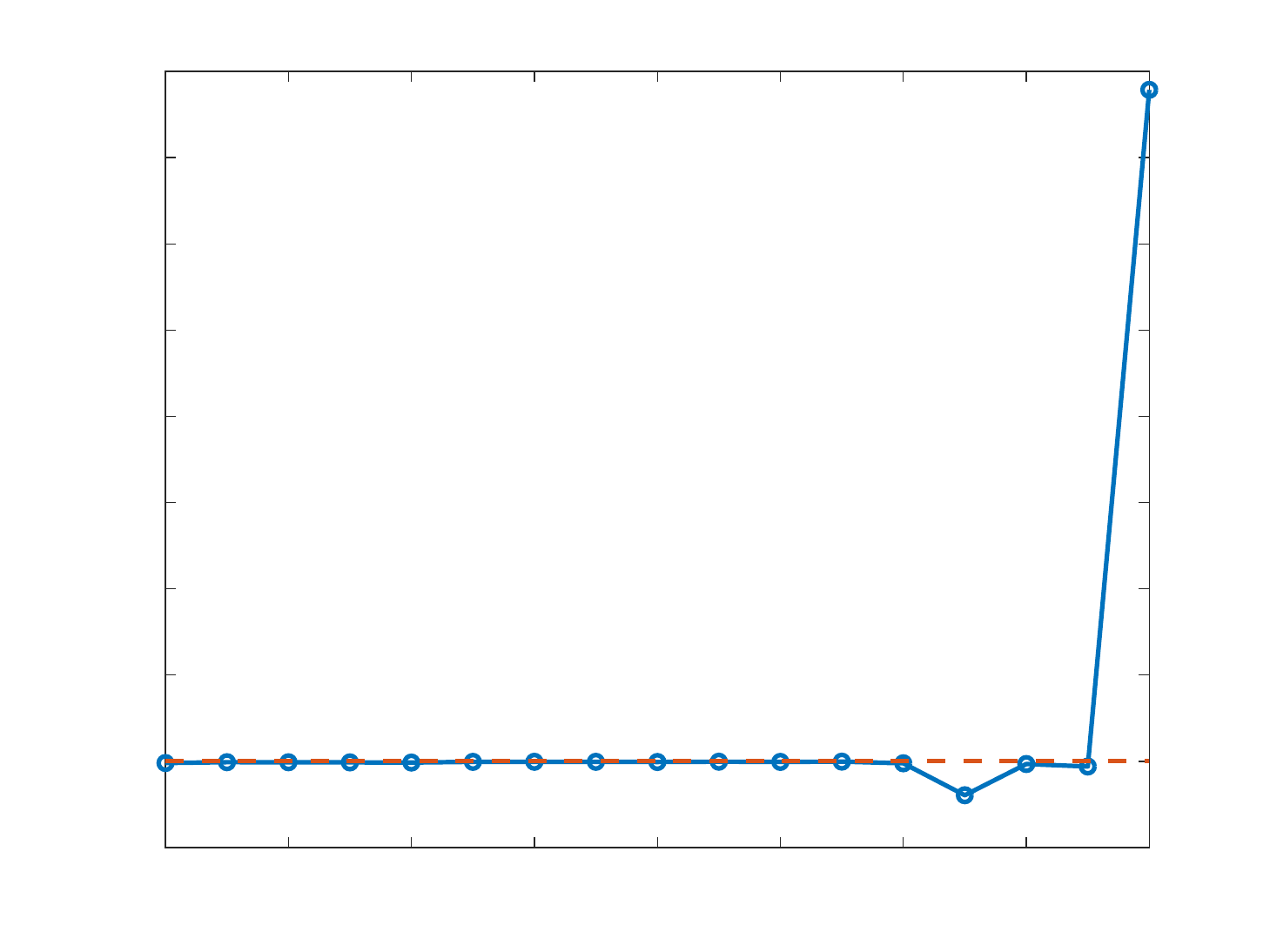}
     \subfloat{\def\svgwidth{0.5\textwidth}
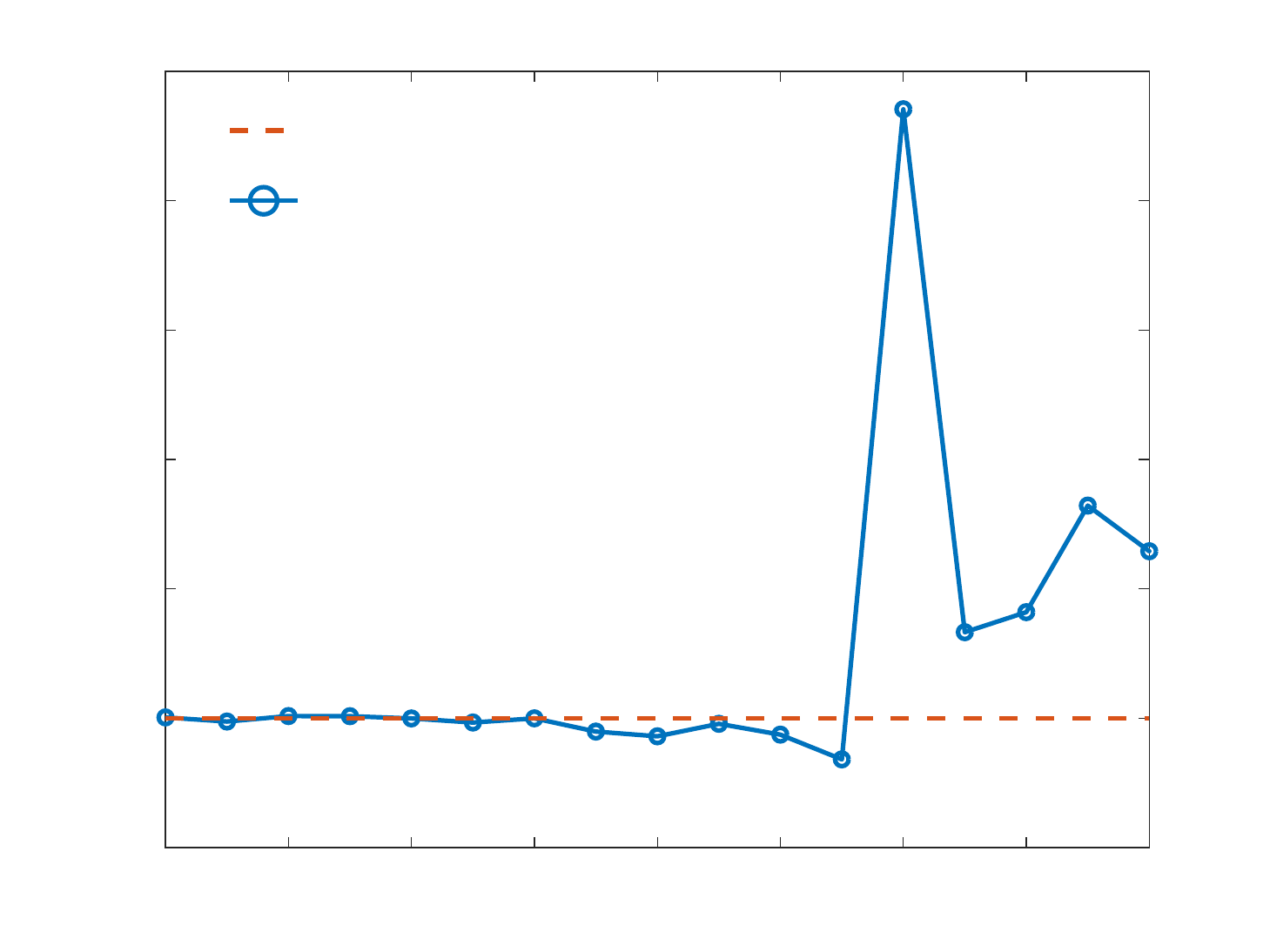}\\
     \subfloat{\def\svgwidth{0.5\textwidth}
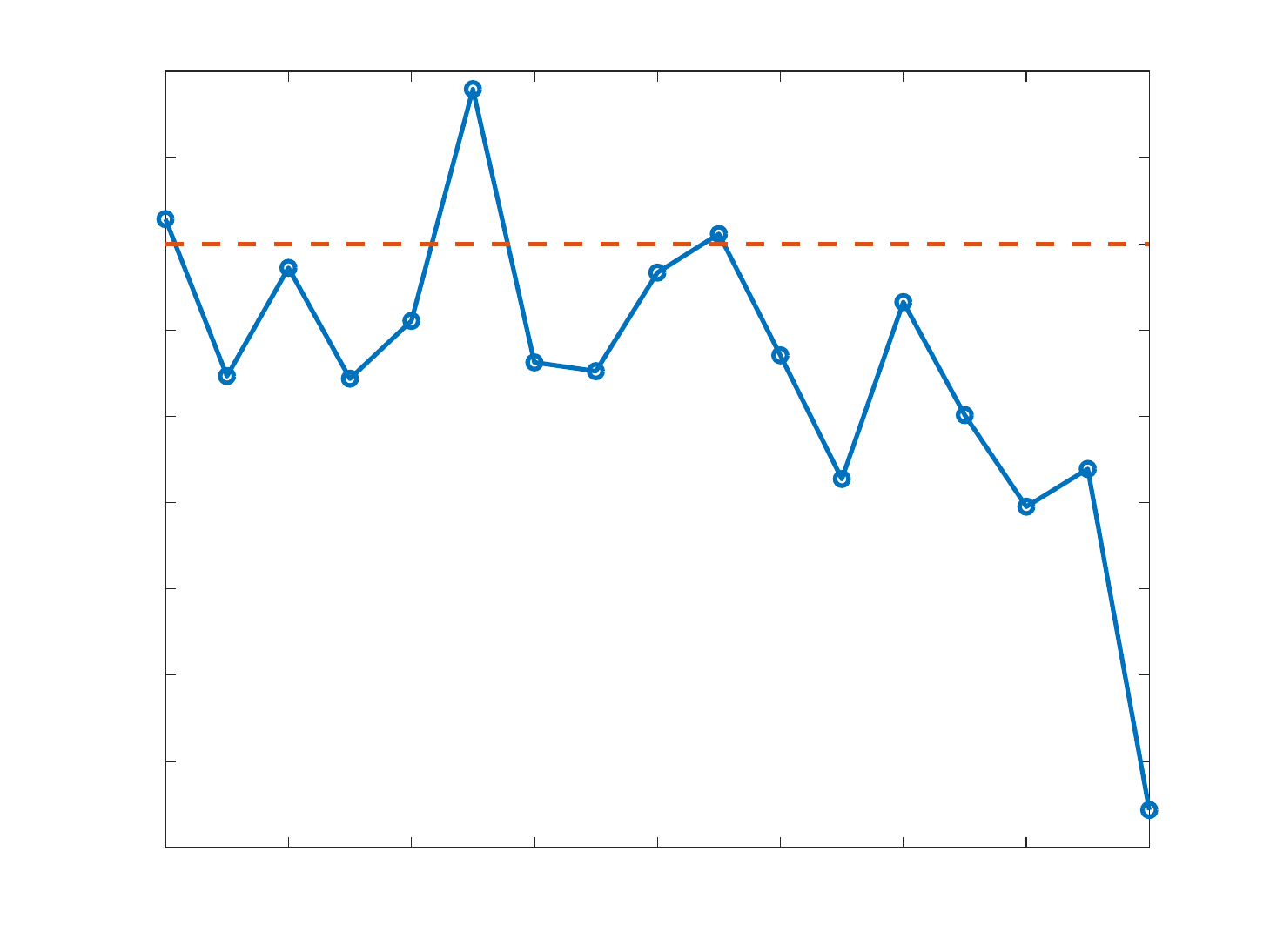}
     \subfloat{\def\svgwidth{0.5\textwidth}
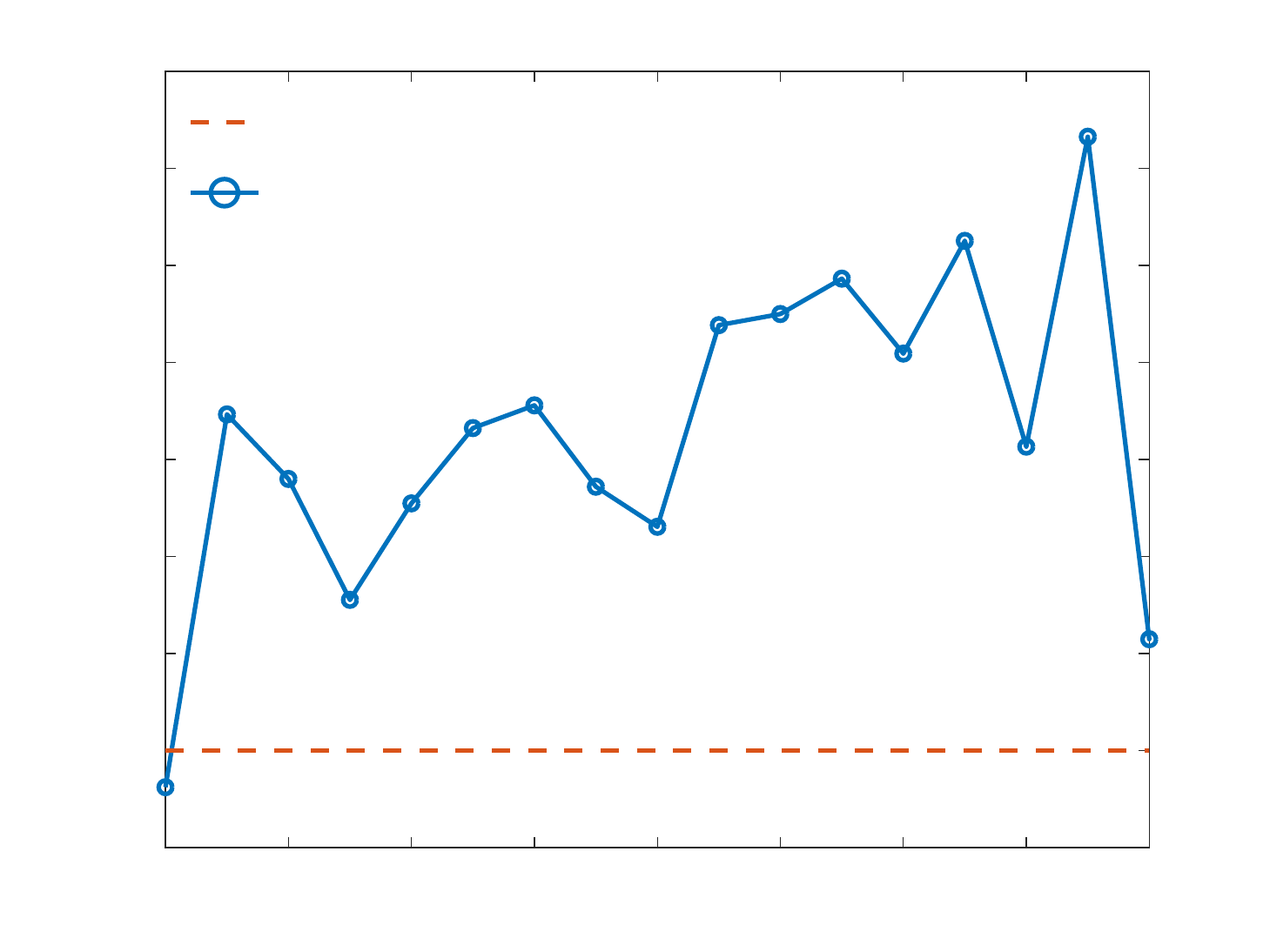}\\
     \subfloat{\def\svgwidth{0.5\textwidth}
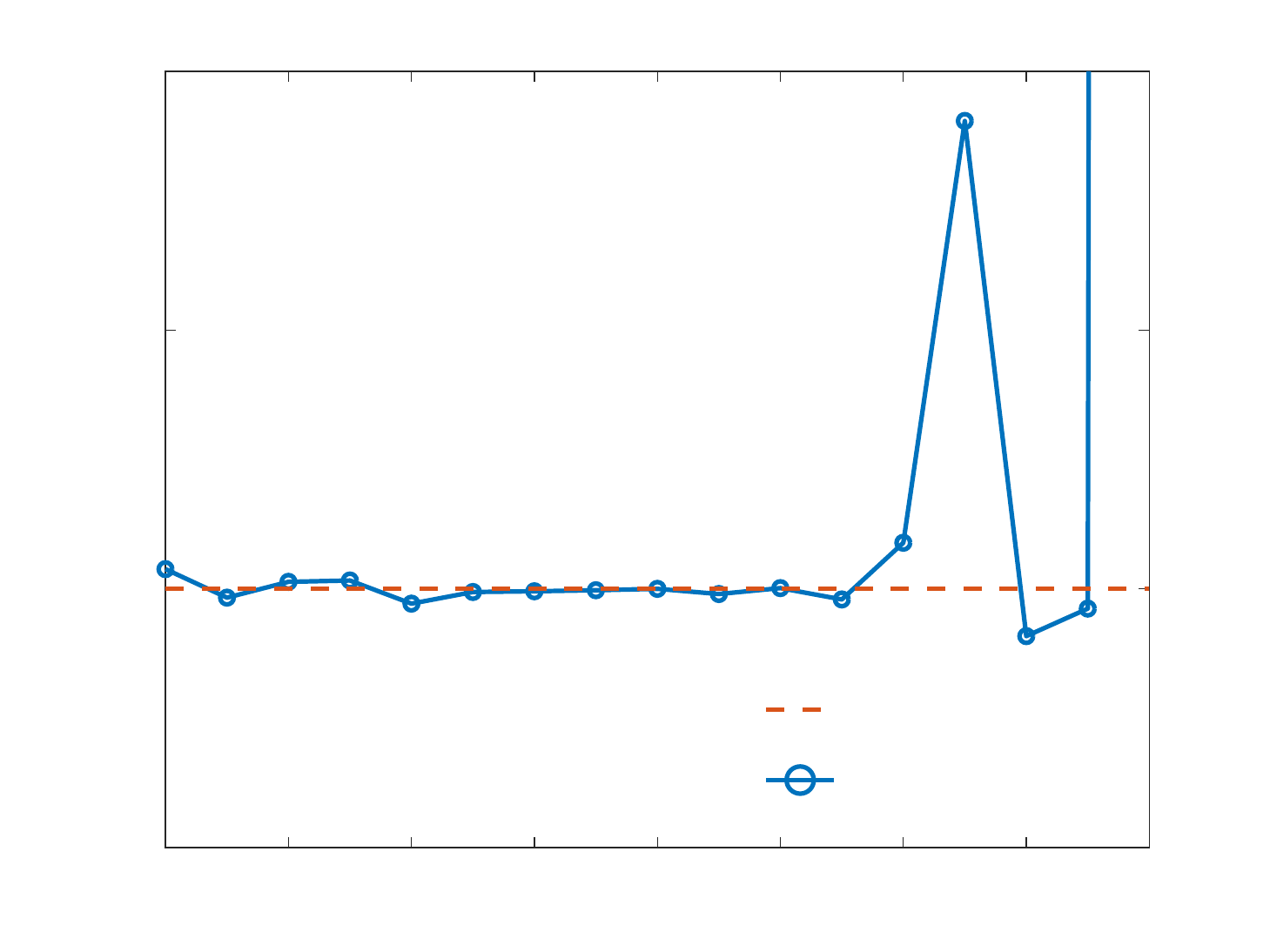}
     \subfloat{\def\svgwidth{0.5\textwidth}
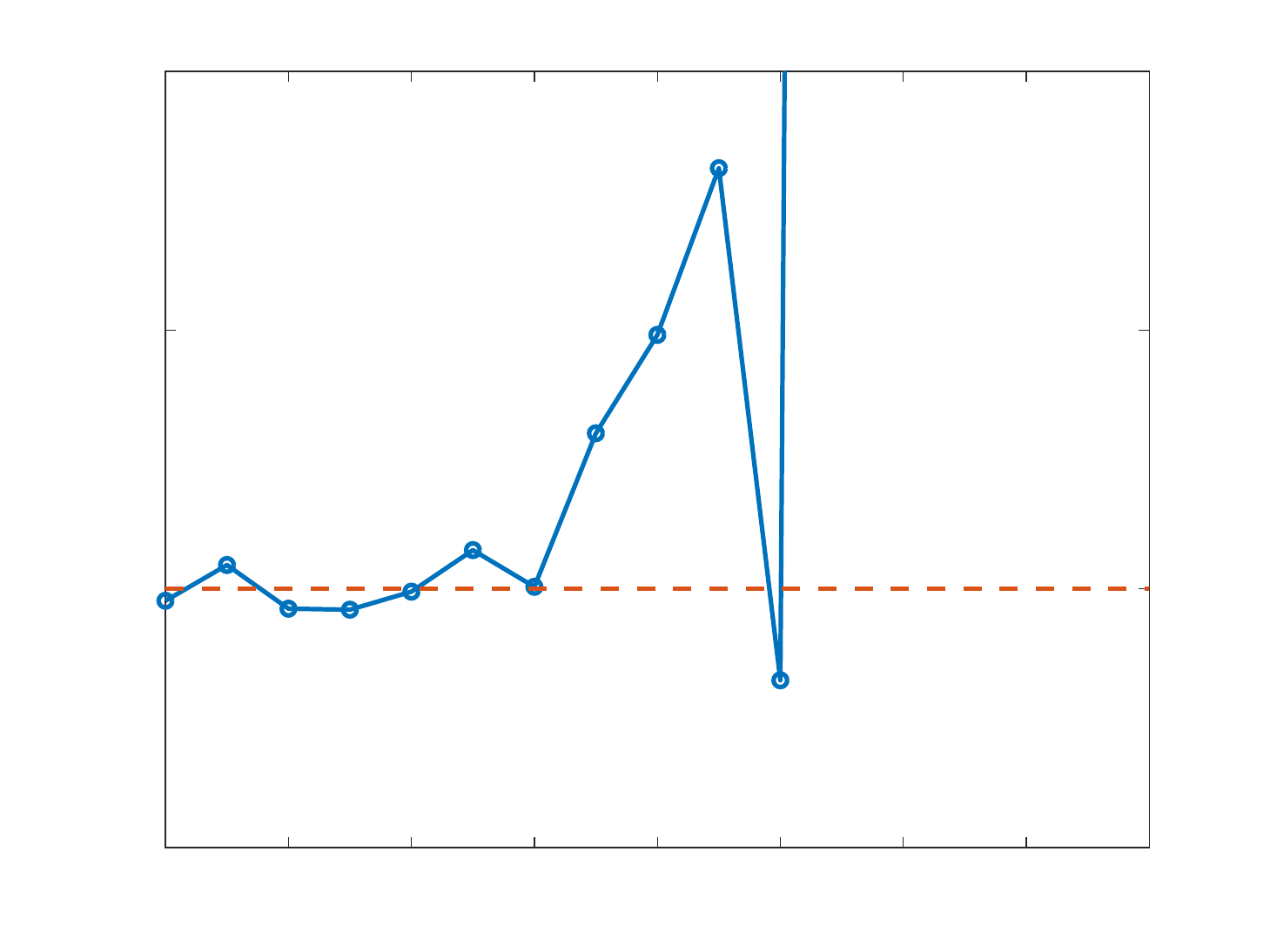}
\caption{Effect of reducing the duration $[0,T]$ of the experiment on the material parameters $\mu, \mu_3, \tau_1$ and $\tau_3$ for a displacement rate of 10 mm/s (left) and 1 mm/s (right), respectively.}
\label{fig:shortenedData}
\end{figure}

The basic elasticity $\mu$ of the single spring can perfectly be identified after only 25 seconds for a displacement rate of 10 mm/s. For the slower rate of 1 mm/s, 55 seconds of the experiment are necessary for a reliable identification. The minimal time to conclusively identify the stiffness $\mu_3$ of the spring in the third Maxwell element takes 40 seconds for a rate of 10 mm/s and 50 seconds for a rate of 1 mm/s, respectively. 

The identification of the relaxation times is more difficult than of the stiffnesses. The relaxation times $\tau_3$ of the third Maxwell element can be precisely determined after 45 seconds for a rate of 10 mm/s and after 70 seconds for the slower rate of 1 mm/s, respectively. After that the result gets worse with every second the data set is shortened. There are not sufficient data points left to perform a reliable reconstruction. The variation in the values for the fastest relaxation time $\tau_1$ is much larger than for $\tau_3$ and also very sensitive to noise. Experimental observations show in general that, if the displacement rate is too high, the third Maxwell element with the slowest relaxation time behaves like an equilibrium spring since there is not enough time to relax during loading. The stiffness of the third element superimposes with the equilibrium spring. However, during relaxation the equilibrium spring and the third Maxwell element can be easily distinguished from each other. Hence, the faster loading rate of 10 mm/s can predict $\mu$ and $\mu_3$ even with 30 s of the data whereas, for the 1 mm/s loading rate, at least 50 s of data is required for accurate prediction. On the other hand the prediction of $\tau_1$ is easier with the 10 mm/s data because, if the displacement rate is too low, the first Maxwell element with the fastest relaxation time starts to relax during the loading itself. This is shown for a rate of 1 mm/s in Figure \ref{fig:shortenedData}. A satisfying identification is not possible during the entire experimental duration of 100 seconds. However, for the faster rate of 10 mm/s $\tau_1$ it can already be determined after 50 seconds.

As a summary of the observations above and for a better understanding, Figure \ref{abb:stress_repeat} again shows the stress-time curves for the three different displacement rates, but now including the time in the experiments to identify the individual material parameters. 
The parameters are not of the same order for the different rates. For instance, for a displacement rate of 1mm/s the relaxation time $\tau_3$ can be reconstructed first, while at 10mm/s the stiffness $\mu$ is most stable to data shortening. Thus, no general statement can be made about a correlation of the material parameters and the duration of the experiment without considering conditions such as the displacement rate. 

\begin{figure}[h]
\centering
\def\svgwidth{.85\linewidth}
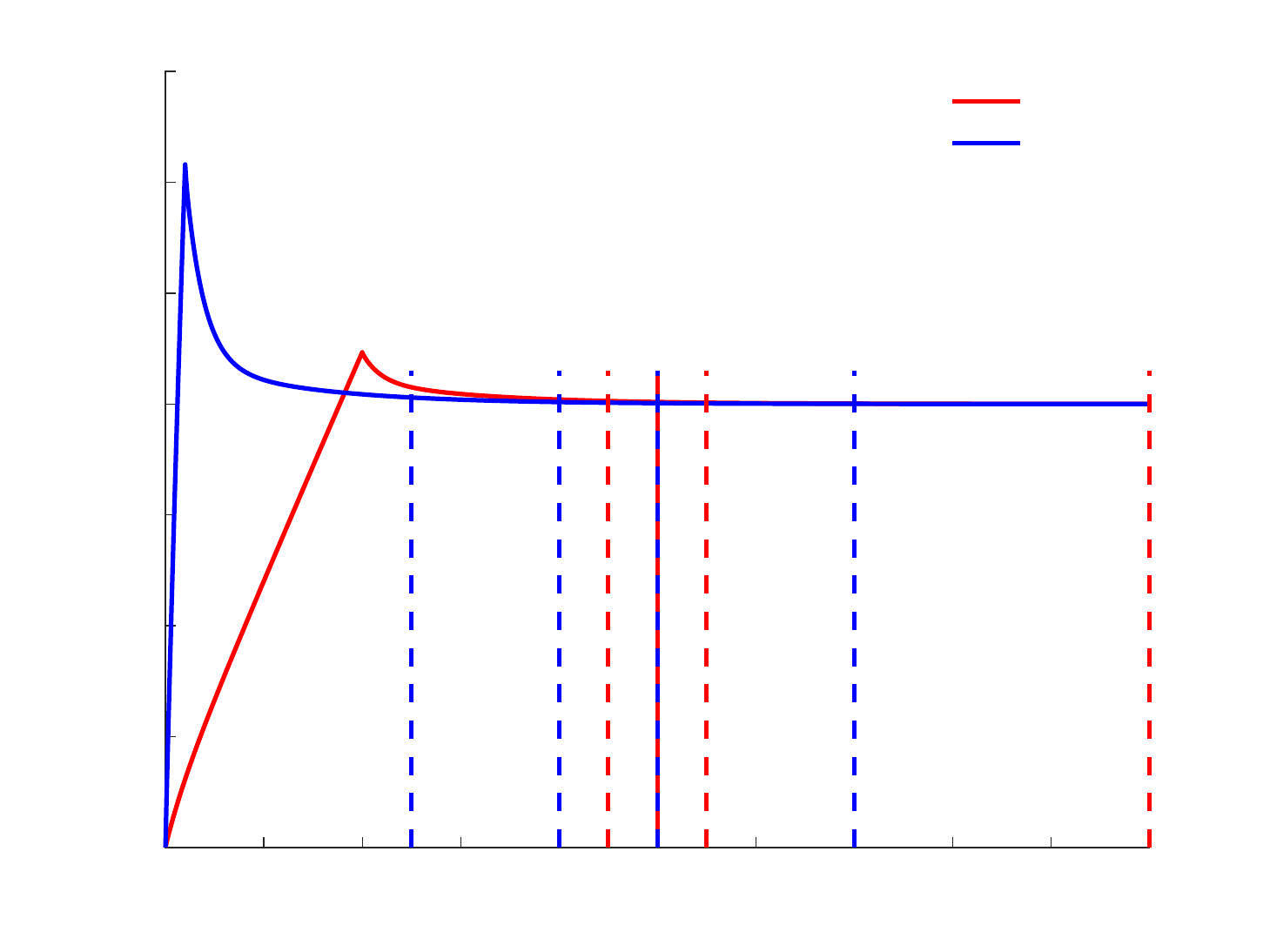
\caption{Synthetic stress-time data produced by a spring combined with a three Maxwell element model including the duration of the experiment to conclusively identify the stiffnesses and relaxation times for displacement rates of 1mm/s (red) and 10mm/s (blue) }
\label{abb:stress_repeat}
\end{figure}






\section{Conclusion and future work}


In this paper, we presented an algorithm for the reconstruction of material parameters in a Maxwell-based rheological multi-parameter model for viscoelastic materials. After a short introduction to the material model, we defined the forward operator and proposed an analytical way to solve the evolution equation with respect to relaxation times. This allowed us to determine an analytical solution of the forward operator with regard to the number of Maxwell elements.

Since the number of Maxwell elements is not known, we suggested a method to overcome the solution-dependence of the forward operator by using a clustering algorithm.
For the parameter identification, we considered different approaches based on minimizing a least squares functional. In experiments we have seen that the simple approach to minimize the residual is not sufficient for perturbed data and especially the material parameters of the Maxwell element with shortest relaxation time is very susceptible to noise.
By analyzing the stress-time curve decomposed into the single spring and Maxwell element stresses, we were able to give a clear recommendation towards higher displacement rates. 
This analysis also contributed to the understanding of why the small relaxation times with their corresponding stiffness are that sensitive to noise. 

Subsequently, we considered various regularization methods such as the classical Tikhonov-Phillips approach. We conclude that a specific regularization adapted to the smallest relaxation time is preferable.

Investigations on shortened data sets demonstrate that our approach for material parameter identification shows great advantages for performing real experiments since it yields suggestions on the minimum duration of the experiments. This is very important, because typical relaxation experiments can last days or weeks until the basic elasticity can be determined using conventional evaluation methods. Whether the duration of an experiment is sufficient for the identification of further parameters is not obvious by conventional approaches. In that sense our new approach might be of significant interest for the identification of parameters in materials science.

Future research is focused on validating the presented algorithms on real data sets. Furthermore, it might be interesting to apply other numerical regularization methods, such as the Landweber iteration or Newton type methods.


\bibliographystyle{siam}
\bibliography{ParameterOptim}   

\end{document}